    \def\tank#1{\mathbb Protected@xdef\@thanks{\@thanks
     \mathbb Protect\footnotetext[0]{#1}}}
    \def\bigfoot{

     \@footnotetext}
    \newcommand{\ea}{\end{array}}
    \numberwithin{equation}{section}
    \newtheorem{theorem}{Theorem}[section]
    \newtheorem{lemma}{Lemma}[section]
    \newtheorem{proposition}[theorem]{Proposition}
    \newtheorem{corollary}[theorem]{Corollary}
    \def\beq{\begin{equation}}
    \def\nneq{\end{equation}}
    \def\bthm{\begin{theorem}}
    \def\nthm{\end{theorem}}
    \def\blem{\begin{lemma}}
    \def\nlem{\end{lemma}}
    \def\bprf{\begin{proof}}
    \def\nprf{\end{proof}}
    \def\bprop{\begin{prop}}
    \def\nprop{\end{prop}}
    \def\brmk{\begin{rem}}
    \def\nrmk{\end{rem}}
    \def\bexa{\begin{exa}}
    \def\nexa{\end{exa}}
    \def\bcor{\begin{cor}}
    \def\ncor{\end{cor}}
      \def\R{\mathbb{R}}
    \newcommand{\1}{{\bf 1}}
\newcommand*\DAlambert{\mathop{} \mathbin\Box}
    \title[Local linearization  for the damped stochastic Klein-Gordon equation]{Local linearization for  the   nonlinear  damped stochastic Klein-Gordon equation}
         \date{}
\begin{document}

    \author[G. Rang]{Guanglin Rang}
    \address[]{Guanglin Rang, School of Mathematics and Statistics,  Wuhan University,  Wuhan, 430072,
    China.}
    \email{glrang.math@whu.edu.cn}

    \author[R. Wang]{Ran Wang}
    \address[]{Ran Wang, School of Mathematics and Statistics,  Wuhan University,  Wuhan, 430072,
    China.}
    \email{rwang@whu.edu.cn}

    \maketitle
     \noindent {\bf Abstract:}  For the $1+1$ dimensional nonlinear damped stochastic Klein-Gordon equation driven by space-time white noise, we  prove that the second-order increments of the solution can be approximated,  after scaling with the diffusion coefficient,  by those of the corresponding linearized stochastic Klein-Gordon equation.    This extends the result of Huang et al. \cite{HOO2024} for the stochastic wave equation.  A key difficulty arises from the more  complex structure of the Green function, which we overcome by means of subtle analytical estimates.     As applications, we analyze the quadratic variation of the solution and construct a consistent estimator for the diffusion parameter.
              \vskip0.3cm
 \noindent{\bf Keywords:} {Stochastic Klein-Gordon equation; local linearization; quadratic variation; parameter estimation.}
 \vskip0.3cm

\noindent {\bf MSC: } {60H15; 60G17; 60G22.}
    \section{Introduction}
Consider the nonlinear    damped stochastic Klein-Gordon equation:
      \begin{equation}\label{eq:DSKG}
\begin{cases}
\vspace{6pt}
\displaystyle{\left (\DAlambert+a\partial_t+m^2 \right)u(t,x)= F(u(t,x))\dot    W(t,x), \quad t>0, x \in \mathbb R,}\\
\displaystyle{u(0, x) = 0, \quad \frac{\partial}{\partial t} u(0, x) = 0.}
\end{cases}
\end{equation}
Here,  $\DAlambert$  is the d'Alembertian in one-dimensional space  defined by 
\begin{align}\label{eq dAl}
\DAlambert:=\partial_{tt}-\partial_{xx},
\end{align} 
the constants $a \in \mathbb R$ and $m\in \mathbb R_+$ are known from the physics literature as the damping constant and mass, the diffusion coefficient   $F:\mathbb R\rightarrow\mathbb R$  is a   globally  Lipschitz continuous function, and $\dot W$ is a Gaussian space-time white noise defined on a complete probability space $(\Omega, \mathcal {F}, \mathbb P)$.   The case $a>0$ corresponds to ``damping", whereas    $a<0$  corresponds to ``excitation".  
   When  $a=0, m=0$, \eqref{eq:DSKG} reduces to the well-known stochastic wave equation, see, e.g.,    \cite{D99, DS15, Walsh86}.

 It is well known that   stochastic partial differential  equations with multiplicative noise has  local linearization behavior; see, e.g.,  \cite{FKM2015, HP15}.   To state this property precisely, we first introduce the following stochastic heat equation   (SHE, for short):
        \begin{equation}\label{SHE}
\begin{cases}
\vspace{6pt}
\displaystyle{\frac{\partial}{\partial t} X(t, x) =\frac{\partial^2}{\partial x^2} X(t, x) +   F(X(t,x)) \dot{W}(t, x), \quad t \ge 0, x \in \mathbb{R},}\\
\displaystyle{X(0, x) = 0,}
\end{cases}
\end{equation} 
where  $\dot W$ and  $F$ are the same as in \eqref{SHE}.  Let   $Z$ denote the linearized version of $X$, i.e.,    the solution to the   SHE \eqref{SHE} with $F\equiv1$. Then,  the spatial increments of $X$ satisfy 
  \begin{equation}\label{SHE appro}
  X(t, x + h) - X(t, x) = F(X(t, x)) \left\{Z(t, x + h) - Z(t, x)\right\} +  \mathcal R_{t, x}(h),
\end{equation}  
where  the remainder term $\mathcal R_{t, x}(h)$ converges to zero   faster than $Z(t, x + h) -Z(t, x)$   as $h\to 0$.   Relation \eqref{SHE appro} shows that, for each fixed $t>0$, the  local spatial  fluctuations  of    $X(t, \cdot)$ are essentially governed by those of  $Z(t, \cdot)$.   In other words,  ignoring precise regularity conditions,   \eqref{SHE appro} indicates  that $X(t, \cdot)$ is controlled by $Z(t, \cdot)$ in the sense of Gubinelli's theory of  controlled paths     \cite{G04}.  
  
  An analogous local linearization holds   for  temporal increments: for fixed $t>0$ and $x\in \mathbb R$, as $\varepsilon \downarrow 0$, the increment  $X(t+\varepsilon, x)-X(t, x)$ admits a similar structure; see \cite{KSXZ2013, HP15, WX2024, QWWX2025} and references therein. Those results provide a   quantitative framework for analyzing the local   structure   of   sample paths  of the solution, including properties such as  Khinchin's law of the iterated logarithm,  Chung's  law of the iterated logarithm,    quadratic variation, and  small-ball probability estimates; see, e.g.,  \cite{CHKK19, DNP2025, Das2024,  HK2017,  HL2025, KKM23, KLPX25} and references therein.   
 
 In sharp contrast to the parabolic setting,  the stochastic wave equation does not admit local linearization from a one-parameter perspective. This key feature was established by Huang and Khoshnevisan, who derived the asymptotic behavior of   the temporal and spatial quadratic variations individually through a sequence of approximations in \cite{HK2016}. Subsequently, Tudor and Zurcher provided an alternative proof using Malliavin calculus and further extended the discussion to applications in statistical inference in \cite{TZ2025}.

  Recently,  from a bi-parameter perspective,  Huang  et al.   \cite{HOO2024}  revisited the local linearization problem for the stochastic wave equation and showed that such linearization becomes possible when both temporal and spatial parameters are considered jointly.       
  
  In this work, we extend   the bi-parameter local linearization result of \cite{HOO2024} to the nonlinear damped stochastic Klein-Gordon equation \eqref{eq:DSKG}.    We show that the linearization framework is applicable to the equation \eqref{eq:DSKG}   when damping or mass terms are included.

       To carry out this analysis,  let us first give the    linear damped stochastic Klein-Gordon in one spatial dimension: 
      \begin{equation}\label{eq:DSKG linear}
\begin{cases}
\vspace{6pt}
\displaystyle{\left(\DAlambert+a\partial_t+m^2\right )U(t,x)=\dot
    W(t,x), \quad t>0, x \in \R,}\\
\displaystyle{U(0, x) = 0, \quad \frac{\partial}{\partial t} U(0, x) = 0.}
\end{cases}
\end{equation}

 Consider a   coordinate system $(\tau, \lambda)$ obtained by rotating the $(t, x)$-coordinates by $-45^\circ$.  That is, 
\begin{equation}\label{eq transform}
(\tau, \lambda): = \Big(\frac{t-x}{\sqrt{2}}, \frac{t+x}{\sqrt{2}}\Big) \quad \text{and} \quad (t, x) 
= \Big( \frac{\tau+\lambda}{\sqrt{2}}, 
\frac{-\tau+\lambda}{\sqrt{2}} \Big). 
\end{equation}
The  axes $\tau$ and $\lambda$ correspond to  the characteristic line directions of the wave equation, see \cite{QT07,Walsh86}.                  
        For $\tau>0$ and $\lambda\ge -\tau$,  define 
\begin{equation}\label{SWE 2}
v(\tau, \lambda) := u\left(\frac{\tau+\lambda}{\sqrt{2}}, \frac{-\tau+\lambda}{\sqrt{2}}\right),
\end{equation} 
and
\begin{equation}\label{SWE 3}
V(\tau, \lambda) := U\left(\frac{\tau+\lambda}{\sqrt{2}}, \frac{-\tau+\lambda}{\sqrt{2}}\right). 
\end{equation}

 For  any $\varepsilon > 0$, define the difference operators $\delta_{\varepsilon}^{(j)}, j=1, 2$  by  
\begin{equation}\label{eq diff}
 \begin{split}
 \delta_{\varepsilon}^{(1)}f(\tau, \lambda):=&\, f(\tau+\varepsilon, \lambda)-f(\tau, \lambda),\\
   \delta_{\varepsilon}^{(2)}f(\tau, \lambda):=&\, f(\tau, \lambda+\varepsilon)-f(\tau, \lambda).
 \end{split}
 \end{equation} 
Define the remainder terms $R_{\varepsilon}^+(\tau, \lambda)$ and $R_{\varepsilon}^-(\tau, \lambda)$ as follows:  
 \begin{equation}\label{eq R}
\begin{split}
 R_{\varepsilon}^{\pm}(\tau, \lambda) := & \,\delta_{\pm\varepsilon}^{(1)}\delta_{\varepsilon}^{(2)}v(\tau, \lambda)-F(v(\tau, \lambda))\delta_{\pm\varepsilon}^{(1)}\delta_{\varepsilon}^{(2)}V(\tau, \lambda)\\
=&\, \left\{ v(\tau\pm\varepsilon, \lambda+\varepsilon)-v(\tau\pm\varepsilon, \lambda) - v(\tau, \lambda+\varepsilon)+v(\tau, \lambda)\right\}\\
&-F(v(\tau, \lambda))\left\{V(\tau\pm\varepsilon, \lambda+\varepsilon)-V(\tau\pm\varepsilon, \lambda)-V(\tau, \lambda+\varepsilon)+V(\tau, \lambda) \right\}.
\end{split}
\end{equation}

      \begin{theorem}\label{thm error 1}   For any $p \ge 1, M>0$ and $L>0$, there exists a constant $c(p, M, L)>0$ such that 
\begin{align}\label{eq error}
\left\| R_{\varepsilon}^{\pm}(\tau, \lambda)\right\|_{L^p(\Omega)} \le c(p, M, L)\varepsilon^{ \frac{3}{2}},
\end{align}
holds uniformly for all $\tau \in [0, M]$, $\lambda \in [-\tau, L]$,  and  all sufficiently small $\varepsilon > 0$.
\end{theorem}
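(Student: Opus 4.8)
The plan is to pass to the mild (Walsh) formulations. Let $G$ be the retarded fundamental solution of $\DAlambert+a\partial_t+m^2$ and let $\widetilde G$ denote its expression in the rotated coordinates \eqref{eq transform}, namely $\widetilde G(\tau,\lambda)=G\big(\tfrac{\tau+\lambda}{\sqrt2},\tfrac{-\tau+\lambda}{\sqrt2}\big)$. Because the rotation in \eqref{eq transform} is orthogonal it carries $\dot W$ to another space-time white noise $\widehat W$, and $v,V$ admit the representations
\begin{align*}
v(\tau,\lambda)=\int \widetilde G(\tau-\sigma,\lambda-\eta)\,F\big(v(\sigma,\eta)\big)\,\widehat W(d\sigma\,d\eta),\qquad V(\tau,\lambda)=\int \widetilde G(\tau-\sigma,\lambda-\eta)\,\widehat W(d\sigma\,d\eta).
\end{align*}
Writing $D_\varepsilon:=\delta_{\pm\varepsilon}^{(1)}\delta_\varepsilon^{(2)}$ and observing that $D_\varepsilon$ acts only on the outer variables $(\tau,\lambda)$ of the kernel, both second-order increments in \eqref{eq R} are stochastic integrals against $\widehat W$ with the \emph{same} kernel $D_\varepsilon\widetilde G$. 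Subtracting them and bringing $F(v(\tau,\lambda))$ inside the integral (justified exactly as in \cite{HOO2024}), the remainder collapses to the single Walsh integral
\begin{align*}
R_\varepsilon^{\pm}(\tau,\lambda)=\int \big(D_\varepsilon\widetilde G\big)(\tau-\sigma,\lambda-\eta)\,\big[F\big(v(\sigma,\eta)\big)-F\big(v(\tau,\lambda)\big)\big]\,\widehat W(d\sigma\,d\eta),
\end{align*}
which is the representation that drives the whole argument.

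Next I would apply the Burkholder--Davis--Gundy inequality followed by Minkowski's inequality in $L^{p/2}(\Omega)$, reducing the estimate to
\begin{align*}
\big\|R_\varepsilon^{\pm}(\tau,\lambda)\big\|_{L^p(\Omega)}^2\ls \int \big(D_\varepsilon\widetilde G\big)^2(\tau-\sigma,\lambda-\eta)\,\big\|F(v(\sigma,\eta))-F(v(\tau,\lambda))\big\|_{L^p(\Omega)}^2\,d\sigma\,d\eta.
\end{align*}
Since $F$ is globally Lipschitz, the last factor is controlled by $\|v(\sigma,\eta)-v(\tau,\lambda)\|_{L^p(\Omega)}^2$. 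Invoking the a priori moment and Hölder estimates for the mild solution --- the uniform bound $\sup_K\|v\|_{L^p(\Omega)}<\infty$ on compacts together with $\|v(\sigma,\eta)-v(\tau,\lambda)\|_{L^p(\Omega)}\ls(|\sigma-\tau|+|\eta-\lambda|)^{1/2}$, both established by the standard moment analysis of \eqref{eq:DSKG} --- this factor is $\ls\min\{1,\,|\sigma-\tau|+|\eta-\lambda|\}$. It then remains to prove the purely analytic bound
\begin{align*}
\int \big(D_\varepsilon\widetilde G\big)^2(\tau-\sigma,\lambda-\eta)\,\min\{1,\,|\sigma-\tau|+|\eta-\lambda|\}\,d\sigma\,d\eta\ls\varepsilon^3,
\end{align*}
after which taking square roots yields \eqref{eq error}.

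For this last step I would use the explicit structure $\widetilde G(\tau,\lambda)=\Phi(\tau,\lambda)\,\mathbf 1_{\{\tau>0\}}\mathbf 1_{\{\lambda>0\}}$, where $\Phi(\tau,\lambda)=\tfrac12 e^{-a(\tau+\lambda)/(2\sqrt2)}\,\mathcal J(\tau\lambda)$ and $\mathcal J$ is the entire function arising from the Bessel factor $J_0$ (or $I_0$); crucially $\mathcal J(\tau\lambda)$ is smooth in $(\tau,\lambda)$ because $J_0$ is even, so $\Phi$ and all its derivatives are bounded on the relevant compact set. The discrete Leibniz rule applied to $D_\varepsilon\big(\Phi\cdot\mathbf 1_{\{\tau>0\}}\mathbf 1_{\{\lambda>0\}}\big)$ then splits $D_\varepsilon\widetilde G$ into: (i) a \emph{main term}, where both first-order differences fall on the indicators, equal to a bounded multiple of the indicator of a square of side $\varepsilon$ adjacent to $(\sigma,\eta)=(\tau,\lambda)$; (ii) two \emph{cross terms}, where one difference falls on $\Phi$ (hence is $O(\varepsilon)$ by smoothness) and the other on an indicator (producing a strip of width $\varepsilon$); and (iii) a \emph{smooth term}, where both differences fall on $\Phi$ (hence is $O(\varepsilon^2)$). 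A scaling count closes the estimate: the main term gives $\varepsilon^2$ (area of the square) times the near-diagonal factor $\varepsilon$, i.e. $\varepsilon^3$; each cross term gives $\varepsilon^2$ (squared size) times $\varepsilon$ (strip area) times an $O(1)$ factor, again $\varepsilon^3$; and the smooth term gives $\varepsilon^4$, which is of lower order. The two signs in $\delta_{\pm\varepsilon}^{(1)}$ and the reflected range $\lambda\ge-\tau$ are handled identically, since $D_\varepsilon\widetilde G$ is always supported in an $O(\varepsilon)$-neighbourhood of the characteristic lines through $(\tau,\lambda)$.

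The main obstacle is exactly the cross and smooth terms, which have no counterpart in the wave-equation case of \cite{HOO2024}: there $\Phi\equiv\tfrac12$ is constant on the light cone, so only the main square term appears. Here the Bessel factor couples $\tau$ and $\lambda$ through the product $\tau\lambda$, so $D_\varepsilon\widetilde G$ no longer factorizes, and one must control the increments of $\Phi$ --- together with the damping exponential and, when $m^2<a^2/4$, the exponentially growing factor $I_0$ --- uniformly over the compact parameter region. Verifying that these extra contributions remain of order $\varepsilon^{3/2}$ rather than spoiling it is where the subtle analytical estimates of the Green function are required.
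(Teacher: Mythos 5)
Your treatment of the kernel is correct and is a genuinely different route from the paper's. You keep the full Green function $G(t,x)=\tfrac12 e^{-at/2}\mathcal J\bigl(t^2-x^2\bigr)\mathbf 1_{\{|x|<t\}}$, whose rotated form factorizes as a smooth function $\Phi$ times $\mathbf 1_{\{\tau>0\}}\mathbf 1_{\{\lambda>0\}}$, and you control the Bessel and damping factors by a discrete Leibniz rule; your scaling counts (main square $\varepsilon^3$, cross strips $\varepsilon^2\cdot\varepsilon$, smooth quadrant $\varepsilon^4$) are correct and reproduce exactly what the paper obtains. The paper instead follows Chen--Lee \cite{CL25} (Section \ref{s:reduction}): it rewrites \eqref{eq:DSKG} with critical damping $m^2=a^2/4$, so that the stochastic kernel is the purely exponential $\tfrac12 e^{-a(t-s)/2}$ on the light cone with no Bessel factor at all, and the mass/damping mismatch becomes a globally Lipschitz drift $u_L$ whose second-order increments are $O(\varepsilon^2)$ (Proposition \ref{lem:rec_incr}); the region decomposition \eqref{eq Domain} and Lemma \ref{lem G increm} then play the role of your Leibniz splitting. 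Either route handles the deterministic estimates.

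The gap is in the probabilistic step. Your representation of $R^{\pm}_{\varepsilon}$ as a \emph{single} Walsh integral with integrand $F(v(\sigma,\eta))-F(v(\tau,\lambda))$, followed by a global application of BDG, is not legitimate: on the two strips and on the quadrant (the analogues of $D_1, D_2, D_3$), the integration points have original time $s=(\sigma+\eta)/\sqrt2<t=(\tau+\lambda)/\sqrt 2$, so the factor $F(v(\tau,\lambda))$ is anticipating there; the combined object is not an It\^o--Walsh integral of an adapted field, and BDG simply does not apply to it. Saying this is ``justified exactly as in \cite{HOO2024}'' mischaracterizes that argument: both \cite{HOO2024} and the paper's Proposition \ref{lem:rec_incr VC} are structured precisely to avoid this step --- the subtraction of $F(v(\tau,\lambda))\,\delta^{(1)}_{\pm\varepsilon}\delta^{(2)}_{\varepsilon}V$ is performed only over the forward diamond $D_4$ (where all points lie in the future of $(\tau,\lambda)$), while over $D_1\cup D_2\cup D_3$ the two terms are estimated \emph{separately}, the anticipating factor being detached by the Cauchy--Schwarz inequality against a Gaussian integral. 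Your remark that the two signs $\delta^{(1)}_{\pm\varepsilon}$ ``are handled identically'' is exactly where this bites hardest: for $\delta^{(1)}_{-\varepsilon}$ the main square straddles the time level $t$, so $F(v(\tau,\lambda))$ is anticipating even on part of the \emph{main} term; this is why the paper further splits $D_4$ into the triangles $D_{4,1}, D_{4,2}$ and introduces the intermediate vertex $P_2$, the resulting term $I_{4,2}=\bigl[F(u(P_2))-F(u(P_1))\bigr]\iint_{D_{4,2}}e^{-a(t-s)/2}\,W(ds,dy)$ being bounded by H\"older's inequality together with \eqref{eq Holder}, not by BDG. The repair is standard (region-by-region estimation off the diamond, triangle decomposition plus intermediate point on the diamond) and does not change your rates, so the conclusion $\varepsilon^{3/2}$ survives; but as written, the central display on which your whole reduction rests is unjustified.
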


  For a   function $f(t, x)$, define    the difference operators $\Delta_{\varepsilon}^{(1)}$ and $\Delta_{\varepsilon}^{(2)}$  as follows:
\begin{equation}\label{eq Delta1-2}
\begin{split}
\Delta_{\varepsilon}^{(1)}f(t,x)=&\, f(t,x+2\varepsilon)-f(t-\varepsilon, x+\varepsilon)-f(t+\varepsilon, x+\varepsilon)+f(t,x),\\
\Delta_{\varepsilon}^{(2)}f(t,x)=&\, f(t+2\varepsilon, x)-f(t+\varepsilon, x-\varepsilon)-f(t+\varepsilon, x+\varepsilon)+f(t,x).
\end{split}
\end{equation} 
  
  As an immediate consequence of Theorem \ref{thm error 1}, we obtain the following bi-parameter local linearization result for the solution of  the equation \eqref{eq:DSKG} in the original coordinates.
  \begin{proposition}\label{thm error 2} Let $u$ and $U$ be the solutions to equations \eqref{eq:DSKG} and \eqref{eq:DSKG linear}, respectively. Then,  for any $p \ge 1$, $M>0$, and $L>0$,  the following estimate holds   uniformly for $t\in [0, M]$, $x\in [-L, L]$,   all sufficiently small $\varepsilon > 0$, and $k=1, 2$:  
\begin{equation}\label{eq error 2}
\begin{split}
 \left\| \Delta_{\varepsilon}^{(k)} u(t, x) - F(u(t,x)) \Delta_{\varepsilon}^{(k)} U(t, x) \right\|_{L^p(\Omega)}  \le c(p, M, L)   \varepsilon^{  \frac{3}{2}}.
\end{split}
\end{equation} 
\end{proposition}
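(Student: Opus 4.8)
The plan is to recognize that Proposition~\ref{thm error 2} is nothing but Theorem~\ref{thm error 1} read in the original coordinates: the two discrete operators $\Delta_\varepsilon^{(1)}$ and $\Delta_\varepsilon^{(2)}$ of \eqref{eq Delta1-2} are precisely the mixed characteristic second differences appearing in $R_\varepsilon^{\mp}$, up to a rescaling of the step size by $\sqrt2$. So the argument reduces to (i) an exact algebraic identification of the operators under the rotation \eqref{eq transform}, and (ii) a check that the box $t\in[0,M]$, $x\in[-L,L]$ can be reduced to the admissible region $\tau\in[0,M']$, $\lambda\in[-\tau,L']$ of Theorem~\ref{thm error 1}.

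First I would carry out the change of variables. Writing $(\tau,\lambda)=\big((t-x)/\sqrt2,(t+x)/\sqrt2\big)$ and using \eqref{SWE 2}--\eqref{SWE 3}, a shift $\tau\mapsto\tau+\sqrt2\varepsilon$ corresponds to $(t,x)\mapsto(t+\varepsilon,x-\varepsilon)$, while $\lambda\mapsto\lambda+\sqrt2\varepsilon$ corresponds to $(t,x)\mapsto(t+\varepsilon,x+\varepsilon)$. Substituting these into $\delta_{\pm\sqrt2\varepsilon}^{(1)}\delta_{\sqrt2\varepsilon}^{(2)}$ and comparing term by term with \eqref{eq Delta1-2}, one obtains the exact identities
\begin{equation}
\Delta_\varepsilon^{(2)}u(t,x)=\delta_{+\sqrt2\varepsilon}^{(1)}\delta_{\sqrt2\varepsilon}^{(2)}v(\tau,\lambda),\qquad
\Delta_\varepsilon^{(1)}u(t,x)=\delta_{-\sqrt2\varepsilon}^{(1)}\delta_{\sqrt2\varepsilon}^{(2)}v(\tau,\lambda),
\end{equation}
and the same identities with $(u,v)$ replaced by $(U,V)$. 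Since $v(\tau,\lambda)=u(t,x)$ gives $F(v(\tau,\lambda))=F(u(t,x))$, the left-hand side of \eqref{eq error 2} is, by the very definition \eqref{eq R}, equal to $R_{\sqrt2\varepsilon}^{+}(\tau,\lambda)$ when $k=2$ and to $R_{\sqrt2\varepsilon}^{-}(\tau,\lambda)$ when $k=1$.

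It then remains to invoke Theorem~\ref{thm error 1} with step $\sqrt2\varepsilon$. The only genuine point is the domain: the image of the box $[0,M]\times[-L,L]$ contains points with $\tau=(t-x)/\sqrt2<0$ (those with $x>t$), which lie outside the region of Theorem~\ref{thm error 1}. I would remove this by spatial stationarity: since $u$ and $U$ are both driven by the same space-time white noise through a spatially translation-invariant Duhamel convolution with zero homogeneous initial data, the pair $\big(u(t,\cdot),U(t,\cdot)\big)$ is stationary under the shifts $x\mapsto x+h$. Hence the law---and therefore the $L^p(\Omega)$ norm---of $\Delta_\varepsilon^{(k)}u(t,x)-F(u(t,x))\Delta_\varepsilon^{(k)}U(t,x)$ does not depend on $x$, so it suffices to bound it at $x=0$, where $\tau=\lambda=t/\sqrt2\in[0,M/\sqrt2]$ stays in the admissible region (the factor $\delta^{(1)}_{-\sqrt2\varepsilon}$ reaching $\tau-\sqrt2\varepsilon<0$ for small $t$ is already accommodated by Theorem~\ref{thm error 1}, whose statement includes $R^-$ for all $\tau\in[0,M]$). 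Applying \eqref{eq error} with $M'=L'=M/\sqrt2+1$ then gives
\begin{equation}
\big\|\Delta_\varepsilon^{(k)}u(t,x)-F(u(t,x))\Delta_\varepsilon^{(k)}U(t,x)\big\|_{L^p(\Omega)}
=\big\|R_{\sqrt2\varepsilon}^{\pm}(\tau,\lambda)\big\|_{L^p(\Omega)}
\le c(p,M',L')(\sqrt2\varepsilon)^{3/2},
\end{equation}
which is the claimed $\varepsilon^{3/2}$ estimate.

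The main obstacle is not analytic but bookkeeping: pinning down the signs and the $\sqrt2$ rescaling in the operator identification exactly, and disposing of the $\tau<0$ part of the box. The stationarity reduction to $x=0$ is the cleanest route; alternatively one could use the reflection symmetry $x\mapsto-x$, but that maps $\Delta_\varepsilon^{(1)}$ to a second difference whose base point is shifted by $2\varepsilon$ relative to the prefactor $F(u(t,x))$, forcing an extra term that one must control by the Lipschitz property of $F$ together with the $\tfrac12$-Hölder spatial regularity of $u$ and the order-$\varepsilon$ size of the linear mixed increment, contributing $\varepsilon^{1/2}\cdot\varepsilon=\varepsilon^{3/2}$; this is admissible but less transparent. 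In either case, once Theorem~\ref{thm error 1} is granted, no further estimate on the Green function is required.
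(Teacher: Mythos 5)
Your proposal is correct and takes essentially the same route as the paper, which offers no separate proof at all: it simply declares the proposition an immediate consequence of Theorem \ref{thm error 1} via the rotation \eqref{eq transform}, and your operator identifications $\Delta_\varepsilon^{(2)}u(t,x)=\delta^{(1)}_{+\sqrt2\varepsilon}\delta^{(2)}_{\sqrt2\varepsilon}v(\tau,\lambda)$ and $\Delta_\varepsilon^{(1)}u(t,x)=\delta^{(1)}_{-\sqrt2\varepsilon}\delta^{(2)}_{\sqrt2\varepsilon}v(\tau,\lambda)$ are exactly that reduction, correctly matched to $R^{+}$ and $R^{-}$ with step $\sqrt2\varepsilon$. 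You are in fact more careful than the paper: the spatial-stationarity argument handling the points with $x>t$ (where $\tau<0$ lies outside the admissible region of Theorem \ref{thm error 1}) fills a domain mismatch that the paper silently glosses over.
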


       For each integer $N\ge1$, define the grid points  
     \begin{align}\label{eq grid}
      \tau_i:=\frac{i}{N},  \ \ \ \text{and } \lambda_j:=\frac{j}{N} \ \ \ \text{for } i, j=0, \cdots, N.
       \end{align}  
      The   quadratic  variation of   the process $v$ given in \eqref{SWE 2}  is defined as 
  \begin{align}\label{eq V v}
Q_{N}(v):=  \sum_{i=0}^{N-1}\sum_{j=0}^{N-1}\Delta_{i,j}(v)^2,
\end{align} 
with
\begin{align}\label{eq V ij}
\Delta_{i,j}(v)  := v\left(\tau_{i+1}, \lambda_{j+1}  \right)-v\left(\tau_{i+1}, \lambda_{j}\right)-v\left(\tau_{i}, \lambda_{j+1}  \right) +v\left(\tau_{i}, \lambda_{j}    \right).
\end{align}

 We now investigate  the   asymptotic behavior of the quadratic variation sequence $\{Q_{N}(v)\}_{ N\ge 1}$  as $N\rightarrow\infty$.  
The core of our approach is to  approximate each increment $\Delta_{i,j}(v)$  by the linearized term $F(v(\tau_i, \lambda_j))\Delta_{i,j}(V)$, where $V$ denotes  the Gaussian field  defined in  \eqref{SWE 3}.    
    The  limit  characterized in the following  proposition  provides a direct  pathway  to  estimating the diffusion parameter. 
    \begin{proposition}\label{prop variation converge}  
  There exists a constant $c>0$ such that,  for every integer    $N\ge 1$, 
  \begin{align}\label{eq quad}
  \mathbb E\left[\left| Q_{N}(v)-  \frac{1}{4} \int_0^1\int_0^1   F^2\big(v(\tau, \lambda) \big) d\tau d\lambda \right|  \right]\le c N^{- \frac12}.
  \end{align} 
   \end{proposition}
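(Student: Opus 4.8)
The plan is to combine the pointwise linearization of Theorem \ref{thm error 1} with a second-moment analysis of the resulting Gaussian quadratic form. Throughout set $\varepsilon=1/N$, write $a_{i,j}:=F(v(\tau_i,\lambda_j))$ and $D_{i,j}:=\Delta_{i,j}(V)$, and note that $\Delta_{i,j}(v)=a_{i,j}D_{i,j}+R_{i,j}$ with $R_{i,j}=R_\varepsilon^{+}(\tau_i,\lambda_j)$, so that $\|R_{i,j}\|_{L^p(\Omega)}\le c\varepsilon^{3/2}$ by Theorem \ref{thm error 1}. Expanding the square gives
\[
Q_N(v)=\sum_{i,j}a_{i,j}^2D_{i,j}^2+2\sum_{i,j}a_{i,j}D_{i,j}R_{i,j}+\sum_{i,j}R_{i,j}^2 .
\]
First I would dispose of the two error sums. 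Using the uniform moment bounds $\sup_{i,j}\|a_{i,j}\|_{L^4(\Omega)}\le C$ (from the global Lipschitz continuity of $F$ and the standard moment estimates for $v$), the Gaussianity of $D_{i,j}$ together with the variance bound $\mathbb E[D_{i,j}^2]\le c\varepsilon^2$, and $\|R_{i,j}\|_{L^2(\Omega)}\le c\varepsilon^{3/2}$, Hölder's inequality yields $\mathbb E|a_{i,j}D_{i,j}R_{i,j}|\le c\varepsilon^{5/2}$ and $\mathbb E R_{i,j}^2\le c\varepsilon^{3}$. Summing over the $N^2=\varepsilon^{-2}$ cells bounds the cross term by $c\varepsilon^{1/2}=cN^{-1/2}$ and the remainder term by $c\varepsilon=cN^{-1}$, both within the claimed rate.

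It then remains to show that $\sum_{i,j}a_{i,j}^2D_{i,j}^2$ is within $cN^{-1/2}$ in $L^1$ of $\tfrac14\int_0^1\int_0^1F^2(v)\,d\tau d\lambda$. I would split
\[
\sum_{i,j}a_{i,j}^2D_{i,j}^2=\sum_{i,j}a_{i,j}^2\big(D_{i,j}^2-\mathbb E[D_{i,j}^2]\big)+\sum_{i,j}a_{i,j}^2\,\mathbb E[D_{i,j}^2].
\]
For the mean part, the analytic estimates on the Klein-Gordon Green function developed for Theorem \ref{thm error 1} give $\mathbb E[D_{i,j}^2]=\tfrac14\varepsilon^2+r_{i,j}$ uniformly over the cells, where the leading constant $\tfrac14$ is the square of the coefficient $\tfrac12$ of the wave-type leading part of the Green function (the Bessel/damping corrections being of higher order near the tip of the light cone) and $\sum_{i,j}\mathbb E[a_{i,j}^2]\,|r_{i,j}|=O(N^{-1/2})$. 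Hence $\sum_{i,j}a_{i,j}^2\mathbb E[D_{i,j}^2]$ agrees with $\tfrac{\varepsilon^2}{4}\sum_{i,j}F^2(v(\tau_i,\lambda_j))$ up to $O(N^{-1/2})$ in $L^1$, and the latter is a Riemann sum for $\tfrac14\int_0^1\int_0^1F^2(v)$ whose error is $O(N^{-1/2})$ by the $L^p$ Hölder-$\tfrac12$ regularity of $v$.

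The main obstacle is the fluctuation term, and it is here that the Klein-Gordon case genuinely departs from the wave equation of \cite{HOO2024}. I would estimate its $L^2(\Omega)$ norm by expanding the square and summing the covariances $\mathbb E[a_{i,j}^2a_{k,l}^2(D_{i,j}^2-\mathbb E D_{i,j}^2)(D_{k,l}^2-\mathbb E D_{k,l}^2)]$. For the wave equation each increment $D_{i,j}$ equals the white noise integrated over the cell $[\tau_i,\tau_{i+1}]\times[\lambda_j,\lambda_{j+1}]$; since this cell is disjoint from the past light cones determining $a_{i,j},a_{k,l}$ and any earlier increment, a conditioning argument forces every off-diagonal term to vanish, leaving only the diagonal $\sum_{i,j}\mathbb E[a_{i,j}^4]\,\mathrm{Var}(D_{i,j}^2)\le cN^2\varepsilon^4=cN^{-2}$. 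For the Klein-Gordon equation this exact orthogonality fails: the mixed difference of the Green function is no longer supported in the cell but has tails reaching back into the light cone. I would therefore decompose $D_{i,j}=D_{i,j}^{\mathrm{cell}}+D_{i,j}^{\mathrm{tail}}$, where $D_{i,j}^{\mathrm{cell}}$ is the contribution of the noise in the cell (recovering the orthogonal-increment structure and making the off-diagonal terms vanish as above) and $D_{i,j}^{\mathrm{tail}}$ is the correction, controlled through the Green-function estimates by $\|D_{i,j}^{\mathrm{tail}}\|_{L^2(\Omega)}=o(\varepsilon)$ with a summable order. The cross contributions carrying at least one tail factor are then negligibly small, yielding $\mathbb E\big[\big(\sum_{i,j}a_{i,j}^2(D_{i,j}^2-\mathbb E D_{i,j}^2)\big)^2\big]\le cN^{-1}$, hence an $L^1$ bound of order $N^{-1/2}$. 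Establishing the required quantitative decay of $D_{i,j}^{\mathrm{tail}}$ and of the covariances $\mathbb E[D_{i,j}D_{k,l}]$ for the more complicated Klein-Gordon kernel is the crux of the argument; everything else follows the scheme above.
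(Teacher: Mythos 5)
Your overall scheme is the same as the paper's: reduce $Q_N(v)$ to the Gaussian quadratic form $\sum_{i,j}a_{i,j}^2D_{i,j}^2$ via Theorem~\ref{thm error 1} plus Cauchy--Schwarz (your cross and remainder sums are exactly the paper's term $I_1$, written as $A^2-B^2=2B(A-B)+(A-B)^2$), then split into a mean part, handled by the variance asymptotics of Corollary~\ref{coro V} and a Riemann-sum/H\"older argument (the paper's $I_3$), and a fluctuation part whose second moment must be shown to be $O(N^{-1})$ (the paper's $I_2$). Working with $v$ directly instead of first splitting $v=v_L+v_C$ as the paper does (Proposition~\ref{lem:rec_incr} disposing of the drift) is harmless, since Theorem~\ref{thm error 1} is stated for $v$ itself. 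You also correctly identify the genuine subtlety: unlike the pure wave case of \cite{HOO2024}, the Klein--Gordon increments $D_{i,j}$ are not exactly independent across cells, because the kernel increment does not cancel outside the diamond; the paper addresses this with a filtration-conditioning argument (its Step~2, using \eqref{eq sigma filed} and \eqref{eq Vij var}), while you propose a cell/tail decomposition.

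The gap is quantitative and sits exactly at the step you defer. From the computations behind Lemma~\ref{lem G increm}, the best available tail bound is $\|D_{i,j}^{\mathrm{tail}}\|_{L^2(\Omega)}\lesssim \varepsilon^{3/2}$ (the kernel increment is $O(\varepsilon)$ on the strips $D_1\cup D_2$, which have area $O(\varepsilon)$), and this rate is sharp; note in passing that you need this explicit rate, not merely $o(\varepsilon)$. Writing $C=D^{\mathrm{cell}}$, $T=D^{\mathrm{tail}}$, the expansion of $D^2-\mathbb E D^2$ produces off-diagonal covariance terms of the type
\begin{equation*}
\mathbb E\left[a_{i,j}^2a_{k,l}^2\left(C_{k,l}^2-\mathbb E C_{k,l}^2\right)\left(C_{i,j}T_{i,j}-\mathbb E[C_{i,j}T_{i,j}]\right)\right].
\end{equation*}
This term carries a tail factor, so under your dichotomy (cell$\times$cell vanishes, anything with a tail is negligible) it should be small; but Cauchy--Schwarz gives only $\lesssim \varepsilon^{2}\cdot\varepsilon^{5/2}=\varepsilon^{9/2}$ per pair, and with $\sim N^4$ off-diagonal pairs this sums to $N^{-1/2}$, so your claimed $\mathbb E[S^2]\le cN^{-1}$ degrades to $N^{-1/2}$ and the $L^1$ rate to $N^{-1/4}$, short of \eqref{eq quad}. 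These borderline terms do vanish, but not by smallness of the tail: one must apply the conditional-mean-zero (independence) argument to \emph{every} centered factor that contains a cell variable --- for each off-diagonal pair, at least one of $C_{i,j},C_{k,l}$ lies in a diamond disjoint from the past cones and tail regions of all other factors, so that factor's centered contribution kills the expectation. After that, the only surviving off-diagonal terms are those pairing a pure-tail factor $(T^2-\mathbb E T^2)$ against something, and these are indeed controlled by size, the worst being $\varepsilon^2\cdot\varepsilon^3$ per pair, i.e.\ $N^{-1}$ in total. So the missing ingredient is precisely the conditioning step that constitutes the paper's Step~2; "tail smallness" alone cannot replace it.
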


The remainder of this paper is structured as follows.   In Section \ref{Sec 2}, we compile preliminary results on stochastic integration and  solution properties for  the equation \eqref{eq:DSKG}, adapting  from \cite{D99,DKMNX09,DM03}.   In Section \ref{sec proof}, we first employ an argument from Chen and Lee \cite{CL25} to reduce the problem’s generality, confining the proofs to the critically damped case. Then, based on this reduction, we adapt techniques from \cite{HOO2024} to prove Theorem \ref{thm error 1} within the simplified setting. 
 Finally,  in Section \ref{QVPE}, we develop a quadratic variation analysis for the nonlinear stochastic wave equation and construct the corresponding statistical estimators.
      
 \section{Properties of the  solution}\label{Sec 2}
 \subsection{Green function}
       Taking the Fourier transform in the  spatial variable,    $\mathscr F{G}(t)(\xi):=\hat G(t, \xi)$, the Fourier transform of $G(t, \cdot)$, satisfies the ordinary differential equation
\begin{equation}\label{eq:ODE}
\partial_t^2\hat G  + a\partial_t\hat G  + \bigl(\xi^2+m^2\bigr)\hat G = 0,
\end{equation}
subject to the initial conditions
$$
\hat G(0, \xi)=0,\qquad \partial_t\hat G (0, \xi)=1,
$$  
which  follow from  integrating the original equation in time around $t=0$ and using the fact that $G(t,\cdot)=0$ for $t<0$. 

Solving   \eqref{eq:ODE}  yields 
  \begin{equation}\label{Fourier G}
    \mathscr F{G}(t)(\xi)= 
\begin{cases}
\displaystyle
e^{-\frac{at}{2}}\frac{ \sin \Bigl(t\sqrt{\xi^2+m^2-a^2/4}\,\Bigr)}
     {\sqrt{\xi^2+m^2-a^2/4}}, 
& \text{if } \xi^2 > \frac{a^2}{4} - m^2;\\[8pt]
\displaystyle
e^{-\frac{at}{2}}\, t,  
& \text{if } \xi^2 = \frac{a^2}{4} - m^2;\\[8pt]
\displaystyle
e^{-\frac{at}{2}} \frac{ \sinh \Bigl(t\sqrt{\frac{a^2}{4}-m^2-\xi^2}\,\Bigr)}
     {\sqrt{\frac{a^2}{4}-m^2-\xi^2}},  
& \text{if } \xi^2 < \frac{a^2}{4} - m^2.
\end{cases}
\end{equation}
Note that the first   expression in \eqref{Fourier G} formally contains the other two, since  
$$
\lim_{u\rightarrow0}\frac{\sin(u)}{u}=1\ \ \ \ \text{and } \sin(iu)=i \sinh(u). 
$$
Consequently, we obtain the following unified formula via analytic continuation:
 \begin{equation}\label{eq unif}
  \mathscr F{G}(t)(\xi)= e^{-\frac{at}{2}}\frac{ \sin \Bigl(t\sqrt{\xi^2+m^2-a^2/4}\,\Bigr)}
     {\sqrt{\xi^2+m^2-a^2/4}},
\end{equation}
where the square root and the sine function are understood in the sense of analytic continuation on the complex plane.   For the detailed derivation, we refer to \cite[Example 7]{D99} and \cite[pp. 50--51]{DKMNX09}.

 \begin{lemma}\label{lem G1}There exists a constant $c>0$ such that,  for any $T>0$, 
  \begin{align}\label{eq G110}
  \sup_{t\in [0,T]}\int_{\mathbb R}|\mathscr F G(t)(\xi)|^2d\xi\le c\left(1+T^2\right).
  \end{align}
  \end{lemma}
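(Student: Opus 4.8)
The plan is to insert the unified formula \eqref{eq unif} for $\mathscr{F}G(t)(\xi)$ into the integral and split the frequency line according to the sign of the radicand $\nu(\xi):=\xi^2+m^2-a^2/4$. Writing $\xi_0:=\sqrt{(a^2/4-m^2)_+}$ for the threshold (so $\xi_0=0$ precisely when $m^2\ge a^2/4$), the set $\{\nu\ge0\}$ is where $\mathscr{F}G$ is genuinely oscillatory, while $\{\nu<0\}=(-\xi_0,\xi_0)$ is a bounded ``hyperbolic band'' on which the analytic continuation turns the sine into a $\sinh$. Thus
\[
\int_{\mathbb R}|\mathscr{F}G(t)(\xi)|^2\,d\xi
=e^{-at}\Bigg[\int_{\{\nu\ge0\}}\frac{\sin^2\!\big(t\sqrt{\nu}\big)}{\nu}\,d\xi
+\int_{-\xi_0}^{\xi_0}\frac{\sinh^2\!\big(t\sqrt{-\nu}\big)}{-\nu}\,d\xi\Bigg],
\]
and I would estimate the two pieces separately, absorbing the prefactor $e^{-at/2}$ into each.

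For the oscillatory piece I would use the elementary bound $\sin^2 x\le\min(1,x^2)$, which gives $\frac{\sin^2(t\sqrt\nu)}{\nu}\le\min\!\big(t^2,\nu^{-1}\big)$ pointwise. The key computation is then $\int_{\{\nu\ge0\}}\min\!\big(t^2,\nu^{-1}\big)\,d\xi\le 4t$. When $\xi_0=0$ this follows from $\nu\ge\xi^2$, so that $\min(t^2,\nu^{-1})\le\min(t^2,\xi^{-2})$ and the right-hand side integrates to $4t$; when $\xi_0>0$ one substitutes $\omega=\sqrt{\nu}=\sqrt{\xi^2-\xi_0^2}$, whose Jacobian $d\xi/d\omega=\omega/\sqrt{\omega^2+\xi_0^2}\le1$ reduces the integral to $2\int_0^\infty\min(t^2,\omega^{-2})\,d\omega=4t$. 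Since $e^{-at/2}\le1$ in the damped regime $a\ge0$, this piece is $\lesssim t$.

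For the hyperbolic band (present only when $\xi_0>0$) I would set $\rho:=\sqrt{-\nu}=\sqrt{\xi_0^2-\xi^2}\in[0,\xi_0]$ and use $\sinh(t\rho)\le t\rho\cosh(t\rho)$, i.e.\ $\frac{\sinh(t\rho)}{\rho}\le t\cosh(t\rho)\le t\,e^{t\rho}$. The whole point is that the growth of $\sinh$ is exactly offset by the prefactor: since $\rho\le\xi_0\le|a|/2$, one has $e^{-at/2}\frac{\sinh(t\rho)}{\rho}\le t\,e^{t(\rho-a/2)}\le t$, whence $|\mathscr{F}G(t)(\xi)|^2\le t^2$ on $(-\xi_0,\xi_0)$ and the band contributes at most $2\xi_0\,t^2\le|a|\,t^2$. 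Adding the two pieces yields $\int_{\mathbb R}|\mathscr{F}G(t)(\xi)|^2\,d\xi\le c(a,m)(1+t^2)$, and taking the supremum over $t\in[0,T]$ gives \eqref{eq G110}.

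The main technical point, and the place where the damping enters decisively, is the control of the prefactor $e^{-at/2}$ against the $\sinh$ on the hyperbolic band: the inequality $\rho\le|a|/2$ is what prevents any exponential-in-$t$ blow-up and keeps the estimate polynomial. A secondary subtlety is the transition zone where $\nu\approx0$, in which both $\sin(t\sqrt\nu)/\sqrt\nu$ and $\sinh(t\sqrt{-\nu})/\sqrt{-\nu}$ degenerate to $t$; the uniform bound $\min(t^2,\nu^{-1})$ together with the change of variables $\omega=\sqrt\nu$ handles this crossover without any delicate case distinction near the threshold.
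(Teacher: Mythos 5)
Your proof is correct (in the damped regime $a\ge 0$, a restriction you state explicitly and which the paper makes only implicitly by discarding the factor $e^{-at}$; for $a<0$ no $T$-uniform bound of this form can hold), and although it starts from the same place as the paper's proof --- insert the unified Fourier formula and split $\mathbb R$ according to the sign of $\nu=\xi^2+m^2-a^2/4$ --- the estimates inside each region are genuinely different, and in fact more robust. In the oscillatory region the paper splits further into a near-threshold band $K<|\xi|<2K$, bounded by $t^2$ via $|\sin x/x|\le 1$, and a far region $|\xi|\ge 2K$, bounded by $\int_{|\xi|\ge 2K}\xi^{-2}\,d\xi \simeq 1/K$ using only $|\sin|\le 1$; that last bound degenerates as $K\to 0$ and fails outright in the critical case $m^2=a^2/4$ (where $\nu=\xi^2$ and one must exploit the cancellation of $\sin^2(t|\xi|)/\xi^2$ near $\xi=0$). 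Your pointwise bound $\min(t^2,\nu^{-1})$ combined with the substitution $\omega=\sqrt{\nu}$ treats $K=0$ and $K>0$ uniformly, with no case distinction at the threshold, and yields the sharper estimate $4t$ in place of $t^2+1/K$. In the hyperbolic band, the paper invokes boundedness of $\sinh(x)/x$ on compact intervals, which hides a constant of size $e^{TK}$ depending on $T$; that constant is harmless only because of the discarded damping factor $e^{-at}$, and this is exactly the cancellation you make explicit through $e^{-at/2}\sinh(t\rho)/\rho\le t\,e^{t(\rho-a/2)}\le t$, using $\rho\le a/2$. In short, your argument actually establishes the $T$-uniformity of the constant asserted in \eqref{eq G110}, whereas the paper's proof, read literally, leaves gaps at $K=0$ and in the $T$-dependence of the hyperbolic estimate.
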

  \begin{proof}  
  This estimate  is essentially contained in the proof of Lemma 2.2 in \cite{CL25}. For completeness, we  provide the details.
  
    Using the unified representation  \eqref{Fourier G}, we have 
  \begin{align*}
  \int_{\mathbb R}|\mathscr  F G(t)(\xi)|^2d\xi= &\,  \int_{\xi^2<\frac{a^2}{4}-m^2} e^{-a t}\frac{\sinh^2\left(t\sqrt{\xi^2+m^2-a^2/4}\right)}{\xi^2+m^2-a^2/4} d\xi\\ 
  &\,\, + \int_{\xi^2>\frac{a^2}{4}-m^2} e^{-at}\frac{\sin\left(t \sqrt{\xi^2+m^2-a^2/4}\right)}{\xi^2+m^2-a^2/4} d\xi\\
   =:  &\, I_1+I_2.
  \end{align*}
  
  Set $K:=\sqrt{\left(\frac{a^2}{4}-m^2\right)\vee 0}$. 
  Since $\frac{\sinh(x)}{x}$ is uniformly bounded on any compact interval of $\mathbb R$, we have 
  \begin{align*}
  I_1= \,  \int_{\xi^2\le \frac{a^2}{4}-m^2} e^{-at} t^2\frac{\sinh^2(t\sqrt{\xi^2+m^2-a^2/4}}{t^2(\xi^2+m^2-a^2/4)} d\xi 
 \lesssim   \,      t^2. 
  \end{align*} 
  Here and throughout, we write $a\lesssim b$  if there exists a positive constant $C$, independent of the relevant parameters, such that $a\le C b$.
 
 The second term $I_2$  is further decomposed  into $I_{2,1}+I_{2,2}$, where 
  \begin{align*}
  I_{2,1}:=&\,   \int_{ K<|\xi| <2K} e^{-at}\frac{\sin\left(t \sqrt{\xi^2+m^2-a^2/4}\right)}{\xi^2+m^2-a^2/4} d\xi,\\
   I_{2,2}:=&\,   \int_{|\xi| \ge 2K} e^{-at}\frac{\sin\left(t \sqrt{\xi^2+m^2-a^2/4}\right)}{\xi^2+m^2-a^2/4} d\xi.
  \end{align*}
     Using $\left| \frac{\sin(x)}{x}\right|\le 1$ and the same trick used for $I_1$, we have
  $$
  I_{2,1}\lesssim  t^2. 
  $$
  For $I_{2,2}$,  since $|\sin(x)|\le 1$, we have 
  $$
  I_{2,2}\lesssim  \,  \int_{|\xi| \ge 2K} \frac{1}{\xi^2-K^2} d\xi 
 \lesssim    \,     \int_{|\xi| \ge 2K} \frac{1}{\xi^2 } d\xi =\frac{1}{K}. 
  $$
  
  The proof is complete. 
  \end{proof}

\subsection{The existence and uniqueness of the solution} 
 In this part, we establish the well-posedness of the solution to the equation \eqref{eq:DSKG}.

\begin{proposition}\label{prop solution Holder}  
Suppose that $F$ is Lipschitz continuous.  Then the equation \eqref{eq:DSKG} admits a   unique mild solution $u$   given by 
\begin{equation}\label{eq:mild sol}
    \begin{split}
    u(t, x)=    \int_0^t \int_{\mathbb R} G(t-s, x-y) F(u(s, y)) W(ds, dy).
     \end{split}
\end{equation}
  Moreover,  the solution satisfies  
\begin{itemize}
 \item[(a)] For every  $T>0$ and  $1 \le p<\infty$,  
\begin{align}\label{eq moment bound}
\sup_{0\le t\le T}\sup_{x\in \mathbb R}\mathbb E\left[|u(t,x)|^p\right]<\infty. 
\end{align}
 
\item[(b)] For every  $T>0$  and $1 \le p<\infty$,  there exists a constant $c(p, T)>0$ such that 
\begin{align}\label{eq Holder}
\mathbb E\left[\left|u(t, x)-u(s, y)\right|^p \right]\le c(p, T)\left[|t-s|^{\frac{1}{2}}+|x-y|^{\frac{1}{2}} \right]^{p},
\end{align}
for all $t, s\in [0,T]$ and $x, y\in \mathbb R$. 
\end{itemize}
  \end{proposition}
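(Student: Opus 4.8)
The plan is to run the classical Walsh--Dalang fixed-point scheme, with Lemma~\ref{lem G1} supplying the only nontrivial analytic input. Throughout one may assume $p\ge 2$, since the range $1\le p<2$ then follows from Jensen's inequality on $(\Omega,\mathcal F,\mathbb P)$. The quantity that controls everything is the time-integrated energy $g(s):=\int_{\mathbb R}|\mathscr F{G}(s)(\xi)|^2\,d\xi$, which by Lemma~\ref{lem G1} satisfies $g(s)\lesssim 1+s^2$ and is therefore locally integrable. For existence and uniqueness I would iterate $u_0\equiv 0$ and $u_{n+1}(t,x)=\int_0^t\int_{\mathbb R}G(t-s,x-y)F(u_n(s,y))\,W(ds,dy)$; combining the Burkholder--Davis--Gundy and Minkowski integral inequalities with the Lipschitz bound on $F$ gives, for $D_n(t):=\sup_x\|u_{n+1}(t,x)-u_n(t,x)\|_{L^p(\Omega)}^2$, a Volterra convolution inequality $D_n(t)\lesssim\int_0^t g(t-s)D_{n-1}(s)\,ds$. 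Iterating this inequality against the locally integrable kernel $g$ yields a geometrically summable bound, so $(u_n)$ is Cauchy in $L^p(\Omega)$ uniformly on $[0,T]\times\mathbb R$; its limit is the unique mild solution \eqref{eq:mild sol}, and uniqueness is obtained by the same contraction estimate.

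For part~(a), the identical BDG--Minkowski estimate together with the linear growth of $F$ gives, for $H(t):=\sup_x\|u(t,x)\|_{L^p(\Omega)}^2$, the inequality $H(t)\lesssim\int_0^t g(t-s)\bigl(1+H(s)\bigr)\,ds$. Since $g$ is bounded on $[0,T]$, the Gronwall lemma for convolution inequalities closes the estimate and produces $\sup_{t\le T}H(t)<\infty$, which is precisely \eqref{eq moment bound}.

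For part~(b) I would use BDG once more, now with the bound $\sup_{r,z}\|F(u(r,z))\|_{L^p(\Omega)}<\infty$ following from part~(a) and the linear growth of $F$, to reduce the two increment estimates to deterministic bounds on the Green function. By Plancherel the spatial increment reduces to showing $\int_0^t\!\int_{\mathbb R}|\mathscr F{G}(r)(\xi)|^2\,4\sin^2\!\bigl(\tfrac{\xi(x-y)}{2}\bigr)\,d\xi\,dr\lesssim|x-y|$, which follows from the elementary bound $4\sin^2(\theta/2)\le\min(\theta^2,4)$, the $|\xi|^{-2}$ decay of $|\mathscr F{G}|^2$ at high frequency, and splitting the $\xi$-integral at $|\xi|\sim|x-y|^{-1}$. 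For the temporal increment with $s<t$ and $\delta:=t-s$, I would split $u(t,x)-u(s,x)$ into the overlap integral over $[0,s]$, driven by $\int_0^s\!\int_{\mathbb R}|\mathscr F{G}(t-r)(\xi)-\mathscr F{G}(s-r)(\xi)|^2\,d\xi\,dr$, and the new-mass integral over $[s,t]$, driven by $\int_0^{\delta}g(u)\,du\lesssim\delta$; together these give $\|u(t,x)-u(s,x)\|_{L^p(\Omega)}\lesssim\delta^{1/2}$, and combined with the spatial estimate this yields \eqref{eq Holder}.

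The main obstacle is the temporal overlap term, where the complicated structure of \eqref{eq unif} is felt most sharply. One cannot pass to the time derivative $\partial_\tau\mathscr F{G}(\tau)(\xi)$, since the term $e^{-a\tau/2}\cos(\tau\omega)$ with $\omega=\sqrt{\xi^2+m^2-a^2/4}$ is not square-integrable in $\xi$; instead I would estimate the increment directly, writing it as a product increment and applying $|\sin \alpha-\sin\beta|\le\min(|\alpha-\beta|,2)$ together with the Lipschitz continuity of $\tau\mapsto e^{-a\tau/2}$ on $[0,T]$, which gives the pointwise high-frequency bound $|\mathscr F{G}(t-r)(\xi)-\mathscr F{G}(s-r)(\xi)|\lesssim\min(\delta,\omega^{-1})+\delta\,\omega^{-1}$. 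The remaining difficulty is the low-frequency band $\xi^2<a^2/4-m^2$, where $\mathscr F{G}$ switches from trigonometric to hyperbolic behaviour and the $|\xi|^{-2}$ decay is unavailable; there I would proceed exactly as in the treatment of $I_1$ and $I_{2,1}$ in Lemma~\ref{lem G1}, using the compactness of the band $|\xi|\le 2K$ with $K=\sqrt{(a^2/4-m^2)\vee 0}$ and the local boundedness of $\sinh(x)/x$ to bound the integrand by $C\delta$ uniformly, so that this contribution is again $\lesssim\delta$.
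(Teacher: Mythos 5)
Your treatment of existence/uniqueness and part (a) is essentially the paper's own proof: the paper also runs the Picard iteration $u_0\equiv 0$, $u_{n+1}=\int_0^t\int_{\mathbb R}G(t-s,x-y)F(u_n(s,y))W(ds,dy)$, applies BDG together with Lemma \ref{lem G1} (via Plancherel, which is how $\int_{\mathbb R}G(t-s,x-y)^2dy$ gets controlled), and closes with Gronwall; your use of Minkowski's integral inequality on squared $L^p(\Omega)$-norms instead of H\"older's inequality on $p$-th moments is an immaterial variant. The genuine difference is part (b): the paper does not prove the H\"older estimate at all, deferring to \cite[Lemmas 2.2, 2.3, 2.4]{CL25} and related references, whereas you give a self-contained Fourier-analytic argument --- splitting the temporal increment into an overlap piece and a new-mass piece, replacing the unusable time derivative (your observation that $e^{-a\tau/2}\cos(\tau\omega)$ is not square-integrable in $\xi$ is exactly the right obstruction) by the increment bound $|\sin\alpha-\sin\beta|\le\min(|\alpha-\beta|,2)$, and handling the hyperbolic band $\xi^2<a^2/4-m^2$ by compactness of $\{|\xi|\le 2K\}$. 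This reproduces the content of the cited lemmas of \cite{CL25}, so on this point your proposal is more complete than the paper's text.

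One technical slip in the overlap estimate: your high-frequency bound $|\mathscr F G(t-r)(\xi)-\mathscr F G(s-r)(\xi)|\lesssim\min(\delta,\omega^{-1})+\delta\,\omega^{-1}$ does not have an integrable square in the critically damped case $m^2=a^2/4$. There $K=0$, the band $|\xi|\le 2K$ is empty, the bound is asserted on all of $\mathbb R$, and $\omega=|\xi|$ vanishes at the origin, so $\delta^2\int_{|\xi|\le 1}\omega^{-2}d\xi$ diverges. (In the other regimes you are safe: for $m^2>a^2/4$ one has $\omega\ge\sqrt{m^2-a^2/4}>0$, and for $m^2<a^2/4$ you only invoke this bound on $|\xi|\ge 2K$, where $\omega\gtrsim|\xi|$.) The repair is immediate with your own tools: in the term $\bigl(e^{-a(t-r)/2}-e^{-a(s-r)/2}\bigr)\sin((s-r)\omega)/\omega$ use $|\sin((s-r)\omega)|/\omega\le\min(s-r,\omega^{-1})\le\min(T,\omega^{-1})$, so the second contribution becomes $\delta\min(T,\omega^{-1})$, whose square is integrable uniformly in all three regimes; with this correction the overlap term is $O(\delta)$ and \eqref{eq Holder} follows as you claim.
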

 \begin{proof}  The existence and uniqueness of the solution follow from a standard Picard iteration. Once the moment bound \eqref{eq moment bound} is established,   the H\"older continuity  \eqref{eq Holder} can be derived via a  classical argument based on  \cite[Lemmas 2.2, 2.3, 2.4]{CL25} (see, e.g., \cite[Section 5]{D99}, \cite[Theorem 9]{DM03}, \cite[pp. 59--61]{DKMNX09},  \cite[Lemma 4.2.4]{DS26}, \cite{KS23}).     For  brevity, we  outline  only  the main  steps for  the existence.
      
Set  $u_0 (t, x): = 0$. For $n\ge0$,  assume inductively  that    an adapted, mean-square continuous process
  $\{u_n(s, \cdot)\}_{0\le s\le T}$ taking   values in $L^2(\mathbb R)$ has been defined.  We then define   
\begin{equation}\label{iter n}
u_{n+1} (t, x): =   \int_0^t \int_\R G(t-s, x-y) F(u_{n}(s,y)) W(ds, dy). 
\end{equation}
Because $F$  is   Lipschitz continuous,  $\{F(u_{n}(t,\cdot))\}_{t\in [0,T]} $ remains  adapted and mean-square continuous.  Therefore,   by   \cite[Lemma  6 and Theorem 6]{DM03}   together  with  Lemma \ref{lem G1}, the stochastic integral in \eqref{iter n} is well-defined.

 We first verify a uniform moment bound:  for every  $p\ge2$, 
 \begin{equation}\label{iter n2}
 \sup_{n\ge0}\sup_{0\le t\le T}\sup_{x\in\mathbb R}\mathbb E\left[\left|u_{n}(t, x) \right|^p\right]<+\infty. 
 \end{equation}
 Applying  the  Burkholder-Davis-Gundy (BDG, for short) inequality, H\"older's inequality, and the Lipschitz property of $F$, we have
 \begin{align*}
 & \mathbb E\left[\left|u_{n+1}(t, x) \right|^p\right]\\
 \le &\, c\mathbb E\left[\left|\int_0^t   \int_{\mathbb R} G(t-s, x-y)^2 F(u_n(s, y))^2 ds dy\right|^{\frac{p}{2}}  \right]\\
 \le &\,  c \left(\int_0^t ds \int_{\mathbb R} dy G(t-s, x-y)^2 \right)^{\frac{p}{2}-1} \times \int_0^tds\int_{\mathbb R} dy G^2(t-s, x-y)\mathbb E\left[1+\left|u_n(s, y)\right|^p \right].
 \end{align*}
 Using  Lemma \ref{lem G1},  we have 
    \begin{align*}
 \mathbb E\left[\left|u_{n+1}(t, x) \right|^p\right] \le &\, c(T)\int_0^t    \left(1+\sup_{y\in \mathbb R} \mathbb E\left[\left|u_n(s, y)\right|^p\right]\right)ds.
  \end{align*}
Define 
  $$
  M_n(t):=\sup_{y\in \mathbb R}\mathbb E\left[\left|u_n(t, y)\right|^p\right].
  $$
The above inequality becomes 
  \begin{align}\label{eq Grown 3}
  M_{n+1}(t)\le c(T) \int_0^t\left(1+M_n(s)\right)ds. 
  \end{align}
  An application of Gronwall’s lemma (see, e.g., \cite[p. 26]{DKMNX09}) then yields 
  $$
  \sup_{n\ge 1}\sup_{0\le t\le T}M_{n}(t)<+\infty,
  $$
 which proves  \eqref{iter n2}.
  
  We  now turn  to the  $L^p(\Omega)$-convergence of the Picard  scheme.   An argument similar to that leading to    \eqref{eq Grown 3} gives 
 \begin{equation}
 \begin{split}
& \sup_{x\in \mathbb R} \mathbb E \left[\left|u_{n+1}(t, x)-u_{n}(t, x)\right|^p\right]\\
 \le&\,  c(T) \int_0^t   \sup_{y\in \mathbb R} \mathbb E\left[\left|u_{n}(s, y)-u_{n-1}(s, y)\right|^p\right]  ds.
\end{split}
\end{equation}
By Gronwall’s lemma, the sequence  $\{u_n(t, x)\}_{n \ge 1}$ converges in $L^p(\Omega, \mathcal F, \mathbb P)$  uniformly in $x \in  \mathbb R$  to a limit $u(t, x)$.  Moreover, as in the proof of     \cite[Theorem 9]{DM03}, the limiting process   $\{u(t,\cdot)\}_{ 0 \le  t \le  T }$ is    adapted and   mean-square continuous.  
   
       The proof is complete. 
  \end{proof}

\section{Local linearization  from a bi-parameter viewpoint} \label{sec proof}
  \subsection{Reduction to the critical damping case and the proof of Theorem \ref{thm error 1}}\label{s:reduction}
  We begin by  rewriting  the equation \eqref{eq:DSKG} as 
\begin{align}\label{eq:DSKG+b}
    \begin{cases}
        \left(\DAlambert + a\partial_t + \frac{a^2}{4}\right) u(t, x) = F(u(t, x))\dot{W} (t, x) + b(u(t, x)), \quad t>0, \, x\in \R,\\
        u(0,x) = \partial_t u(0,x) = 0,
    \end{cases}
\end{align}
where   $      b(x):= \left(\frac{a^2}{4}-m^2\right) x$. Since $b$ is globally Lipschitz, the  standard existence and uniqueness theory   ensures  that the solution   to \eqref{eq:DSKG} coincides with the unique mild solution to \eqref{eq:DSKG+b}, given by  the  integral equation 
\begin{equation}\label{eq:DSKG+b:mild}
\begin{split}
    u(t,x) =&    \int_0^t \int_\R \Gamma(t-s,x-y) b(u(s,y)) dsdy\\
    &\, +  \int_0^t \int_\R \Gamma(t-s, x-y) F(u(s, y))W(ds, dy).
\end{split}
\end{equation}
Here, $\Gamma$ denotes the fundamental solution to the operator $\DAlambert + a \partial_t + a^2/4$. From  \eqref{Fourier G}, we directly obtain  
 \begin{equation*} 
     \mathscr F \Gamma(t)(\xi) = \frac{e^{-\frac{at}{2}}\sin(t|\xi|)}{|\xi|}, 
     \end{equation*} 
     and   
      \begin{equation}\label{G: critical2}
     \Gamma(t,x) = \frac{1}{2} e^{-\frac{at}{2}} \1_{\{|x|< t\}}.   
\end{equation} 

Define the linear and critical components of the solution as
    \begin{align}  
   u_L(t, x) := &\, \left(\frac{a^2}{4}-m^2\right) \int_0^t \int_{\mathbb R} \Gamma(t-s,x-y) u(s,y) dsdy, \label{Eq uL}\\
     u_C(t,x) :=&\,   \int_0^t \int_{\mathbb R} \Gamma(t-s, x-y) F(u(s, y)) W(ds, dy). \label{Eq uC} 
\end{align}
Observe that $u_C$ is   exactly  the   mild solution to \eqref{eq:DSKG}  in the critically damped case  $m^2 = \frac{a^2}{4}$.  Substituting  the explicit form \eqref{G: critical2}  into   \eqref{Eq uC} yields 
  \begin{equation}\label{eq solution uc}
\begin{split}
   u_C(t,x) =\, \frac{1}{2}\iint_{\Delta(t,x)} e^{-\frac{a(t-s)}{2}}F(u(s,y))W(ds,dy),
   \end{split}
\end{equation}
 where  
  \begin{align}\label{eq Delta tx}
\Delta(t, x):=\left\{(s,y)\in \mathbb R_+\times \mathbb R: 0\le s\le t, \, |x-y|\le t-s \right\}.
\end{align}
This  region is  illustrated as  the shaded area in Figure \ref{fig1}.

 \begin{figure}[htbp]
\centering
\begin{minipage}[c]{0.48\textwidth}
\centering
\begin{tikzpicture} 
\path [fill=lightgray] (0,-2) -- (0,4) -- (3,1);
\draw [->] (0,-2.5) -- (0,4.6);
\draw [->] (0,0) -- (4,0);
\draw (0, -2) -- (0, 4) -- (3,1) -- (0,-2);
\draw [dashed] (3,0) node [below] {$t$} -- (3,1);
\draw [dashed] (0,1) node [left] {$x$} -- (3,1);
\draw [fill] (3,1) circle [radius=.05];
\draw [fill] (0,4) circle [radius=.05] node [left]{$x+t$};
\draw [fill] (0,-2) circle [radius=.05] node [left]{$x-t$};

\draw [fill] (0,4.6)   node [above]{$y$};
\draw [fill] (4.0,0)   node [right]{$s$};

  \end{tikzpicture}
\caption{}\label{fig1}
\end{minipage}
\hfill
\begin{minipage}[c]{0.48\textwidth}
\centering
\begin{tikzpicture}[scale=0.5]

\draw [->] (0,-6) -- (0,8) node [above] {$y$};
\draw [->] (0,0) -- (8,0) node [right] {$s$};

\draw [thick,domain=0:8/sqrt(2)] plot (\x, {8/sqrt(2)-\x+1});      
\draw [thick,domain=0:5.5/sqrt(2)] plot (\x, {3/sqrt(2)-\x+1});    
\draw [thick,domain=0:8/sqrt(2)] plot (\x, {-8/sqrt(2)+\x+1});     
\draw [thick,domain=0:5.5/sqrt(2)] plot (\x, {-3/sqrt(2)+\x+1});   

\coordinate (O) at (0,0);          
 
\coordinate (G) at ({3/sqrt(2)},1);                                     
\coordinate (A) at ({8/sqrt(2)},1);                                
\coordinate (B) at (0,{8/sqrt(2)+1} );                                
\coordinate (C) at (0,{3/sqrt(2)+1} );                                
\coordinate (F) at ({5.5/sqrt(2)},{8/sqrt(2)-5.5/sqrt(2)+1});        
\coordinate (E) at ({5.5/sqrt(2)},{3/sqrt(2)-5.5/sqrt(2)+1});        
\coordinate (H) at (0,{-3/sqrt(2)+1});                               
\coordinate (I) at (0,{-8/sqrt(2)+1});                               

\fill[blue!40, opacity=0.4] (C) -- (G)-- (F) -- (B) -- cycle;

\fill[green!40, opacity=0.4] (F)-- (G) -- (E) -- (A) -- cycle;

\

\fill[yellow!40, opacity=0.4] (C) -- (G) -- (H) -- cycle;
\fill[red!40, opacity=0.4] (H) -- (G) -- (E) --(I) -- cycle;
 
\fill[blue] (O) circle (2pt);
\node[above left] at (O) {$O$};









\node[black] at (1.5, 3.5) {$D_1$};
\node[black] at (1.5, -1.5) {$D_2$};
\node[black] at (0.75, 1.25) {$D_3$};
\node[black] at (3.8, 1.25) {$D_4$};
 \node[black] at ({8/sqrt(2)+1},1.4) {$(t, x)$};
 
 \end{tikzpicture}
\caption{}\label{fig2}
\end{minipage}
\end{figure}

\begin{proof}[Proof of Theorem \ref{thm error 1}]      
 By symmetry,  it suffices  to prove  \eqref{eq error} for $R_{\varepsilon}(\tau, \lambda)$ with $\tau>0$, $\lambda\ge -\tau$, and $\varepsilon>0$.   Define 
\begin{equation}\label{SWE vL}
v_L(\tau, \lambda) := u_L\left(\frac{\tau+\lambda}{\sqrt{2}}, \frac{-\tau+\lambda}{\sqrt{2}}\right),  \  \ v_C(\tau, \lambda) := u_C\left(\frac{\tau+\lambda}{\sqrt{2}}, \frac{-\tau+\lambda}{\sqrt{2}}\right).
\end{equation}
From Propositions \ref{lem:rec_incr} and \ref{lem:rec_incr VC} below, we obtain
   $$
      \left\|\delta_{\varepsilon}^{(1)}\delta_{\varepsilon}^{(2)}  v_L(t, x) \right\|_{L^p(\Omega)}    +
\left \| \delta_{\varepsilon}^{(1)}\delta_{\varepsilon}^{(2)}v_C(\tau, \lambda)-F(v(\tau, \lambda))\delta_{\varepsilon}^{(1)}\delta_{\varepsilon}^{(2)}V(\tau, \lambda)\right\|_{L^p(\Omega)} \le c    \varepsilon^{\frac{3}{2}}.
$$
Applying the Minkowski inequality then yields the required estimate in Theorem \ref{thm error 1}.
 
               The proof is complete.            
                       \end{proof}

 \subsection{Increment Estimates for  the drift term}
  \begin{lemma}\label{lem G increm} For any $p\ge 1$ and sufficiently small   $\varepsilon>0$, 
\begin{equation}\label{eq:kernel_increment}
\begin{split}
& \int_0^t   \int_{\mathbb R}        \Bigg|  \Gamma(t-s, x-y)-\Gamma\left(t-\frac{\varepsilon}{\sqrt{2}}-s,x+\frac{\varepsilon}{\sqrt{2}}-y\right)\\
  &\,\,\,\,   -\Gamma\left(t-\frac{\varepsilon}{\sqrt{2}}-s, x-\frac{\varepsilon}{\sqrt{2}}-y\right)+\Gamma\left(t- \sqrt{2} \varepsilon-s, x-y\right)  \Bigg|^{p} dsdy \\
  =&\,  \frac{\varepsilon^{2}}{2^{p}}  +O\left(\varepsilon^{3}+\varepsilon^{1+p}\right). 
     \end{split}
  \end{equation}
\end{lemma}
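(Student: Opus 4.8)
The plan is to reduce the estimate to an elementary planar integral by inserting the explicit form \eqref{G: critical2} of $\Gamma$ and then straightening the light cones via a characteristic change of variables. Writing $\sigma = t-s$ and $z = x-y$, each of the four terms in the integrand has the shape $\tfrac12 e^{-a\sigma'/2}\1_{\{|z'|<\sigma'\}}$ for suitably shifted arguments $(\sigma',z')$; passing to the characteristic coordinates $\alpha = \sigma + z$, $\beta = \sigma - z$ turns each indicator into that of a translated quadrant. With $\delta := \sqrt 2\,\varepsilon$ (and using $\tfrac{\varepsilon}{\sqrt2}=\tfrac\delta2$), I expect the four cones to become $\{\alpha>0,\beta>0\}$, $\{\alpha>0,\beta>\delta\}$, $\{\alpha>\delta,\beta>0\}$ and $\{\alpha>\delta,\beta>\delta\}$, while the exponential weights factor as $\tfrac12 e^{-a(\alpha+\beta)/4}$ times the scalars $1$, $e^{a\delta/4}$, $e^{a\delta/4}$, $e^{a\delta/2}$, respectively. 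The Jacobian contributes a harmless factor $\tfrac12$, so that $ds\,dy = \tfrac12\,d\alpha\,d\beta$, and the domain $s\in[0,t]$ becomes $0\le \alpha+\beta\le 2t$.

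Next I would partition the first quadrant into the four regions on which the indicator combination is constant: the small square $A=\{0<\alpha<\delta,\,0<\beta<\delta\}$, the two strips $B=\{0<\alpha<\delta,\,\beta>\delta\}$ and $C=\{\alpha>\delta,\,0<\beta<\delta\}$, and the far quadrant $D=\{\alpha>\delta,\,\beta>\delta\}$; outside the first quadrant all four indicators vanish. On $A$ the bracketed combination equals $1$, on $B$ and $C$ it equals $1-e^{a\delta/4}$, and on $D$ it equals $\bigl(1-e^{a\delta/4}\bigr)^2$. Using $1-e^{a\delta/4}=O(\delta)$ together with $e^{-a(\alpha+\beta)/4}=1+O(\delta)$ on $A$ (where $\alpha+\beta\le2\delta$), the square $A$ produces the leading contribution $\tfrac12\cdot 2^{-p}\cdot\delta^2\bigl(1+O(\delta)\bigr)=\varepsilon^2/2^p+O(\varepsilon^3)$, since $\delta^2=2\varepsilon^2$; this is exactly the main term. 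The strips $B$ and $C$ each contribute $O(\delta^{p}\cdot\delta)=O(\varepsilon^{p+1})$, an $O(\delta)$-thin slab weighed against a bounded $\alpha$- or $\beta$-integral, while $D$ contributes $O(\delta^{2p})=O(\varepsilon^{2p})$. Summing the four and using $2p\ge p+1$ for $p\ge1$ to absorb the $D$-term yields $\varepsilon^2/2^p+O(\varepsilon^3+\varepsilon^{1+p})$, as claimed.

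The point requiring the most care is the uniform control of the error constants, chiefly because $a$ may be negative (the excitation case), so the weight $e^{-a(\alpha+\beta)/4}$ grows in the far region. Here the truncation $\alpha+\beta\le 2t$ coming from $s\ge0$ is essential: it bounds $e^{-a(\alpha+\beta)/4}\le e^{|a|t/2}$ throughout, so the $\alpha,\beta$ integrals over $B$, $C$, and $D$ are finite and dominated by constants depending only on $a$, $p$, and an upper bound for $t$. A secondary bookkeeping issue is to keep the Taylor remainders at the correct order: expanding $e^{a\delta/4}-1=\tfrac{a\delta}{4}+O(\delta^2)$ and restricting the expansion $e^{-a(\alpha+\beta)/4}=1+O(\delta)$ to the small square $A$ guarantees that the correction to the main term is genuinely $O(\varepsilon^3)$ rather than a larger power. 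Once these estimates are assembled, the asserted identity follows by adding the four regional contributions.
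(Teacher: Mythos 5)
Your proof is correct and is essentially the paper's own argument: your regions $A$, $B\cup C$, and $D$ are exactly the paper's $D_4$, $D_1\cup D_2$, and $D_3$ (the decomposition of the light cone according to which of the four shifted kernels are supported), and your per-region estimates --- the main term $\varepsilon^2/2^p+O(\varepsilon^3)$ from the small square at the apex, $O(\varepsilon^{1+p})$ from the two thin strips via the first difference of the exponential factor, and $O(\varepsilon^{2p})$ from the far region via the second difference, absorbed using $2p\ge p+1$ --- coincide with the paper's bounds for $I_4$, $I_1,I_2$, and $I_3$. Your preliminary rotation to characteristic coordinates is a harmless cosmetic simplification that turns the paper's triangles and parallelograms into rectangles; otherwise the two proofs are the same.
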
   
\begin{proof}  The argument follows the method of \cite[Lemma 5.1]{CL25}, which treats  the case  $p=2$. 
Recall the domain $\Delta(t, x)$ defined  in  \eqref{eq Delta tx}.     Define the subsets 
    \begin{equation}\label{eq Domain}
    \begin{split} 
    &  D_1:= \, \left( \Delta(t, x)\setminus \left\{\left|x-\frac{\varepsilon}{\sqrt{2}}-y\right|<t-\frac{\varepsilon}{\sqrt{2}}-s\right\}\right)\cap\left\{\left|x+\frac{\varepsilon}{\sqrt{2}}-y\right|<t-\frac{\varepsilon}{\sqrt{2}}-s\right\}, \\
      &   D_2:= \, \left( \Delta(t, x)\setminus \left\{\left|x+\frac{\varepsilon}{\sqrt{2}}-y\right|<t-\frac{\varepsilon}{\sqrt{2}}-s\right\}\right) \cap \left\{\left|x-\frac{\varepsilon}{\sqrt{2}}-y\right|<t-\frac{\varepsilon}{\sqrt{2}}-s\right\},\\
      &   D_{3}:=  \,  \Delta(t, x)\cap \left\{ \left|x+\frac{\varepsilon}{\sqrt{2}}-y\right|<t-\frac{\varepsilon}{\sqrt{2}}-s\right\} \cap \left\{\left|x-\frac{\varepsilon}{\sqrt{2}}-y\right|<t-\frac{\varepsilon}{\sqrt{2}}-s\right\},\\
    &  D_4:=  \,  \Delta(t, x)\setminus (D_1\cup D_2\cup D_{3}).
    \end{split}
    \end{equation}
    An illustration is given in Figure \ref{fig2}.

Denote the shifted kernels by 
   \begin{equation}
   \begin{split} 
    \Gamma_1(t, x):= &\, \Gamma\left(t-\frac{\varepsilon}{\sqrt{2}},x+\frac{\varepsilon}{\sqrt{2}}\right), \\
   \Gamma_2(t, x):=& \, \Gamma\left(t-\frac{\varepsilon}{\sqrt{2}}, x-\frac{\varepsilon}{\sqrt{2}}\right),  \\
 \Gamma_3(t, x):= & \, \Gamma\left(t-\sqrt{2}\varepsilon, x\right). 
   \end{split}
   \end{equation}
The following observations about these sets will be used:
    \begin{itemize}
        \item $\Delta(t, x)$ is a disjoint union of $D_i, i=1, \cdots, 4$.  It is the support of $\Gamma$.
                  \item For $i=1,2$,  $D_i, $ has area smaller than $\varepsilon  T$ and only $\Gamma$ and $\Gamma_i$ are nonzero on it. 
        \item $D_3$ has area  at most  $T^2/2$ and $\Gamma, \Gamma_1, \Gamma_2$ and $\Gamma_{3}$ are all nonzero on it.
         \item $D_4$ has area $\varepsilon^2$ and only $\Gamma$ is nonzero on it.
    \end{itemize}
    
    Accordingly, we decompose the integral as 
    $$  I_1+I_2+I_3+I_4,$$
    where 
    \begin{align*}
     I_1:=&\, \iint_{D_1} (\Gamma-\Gamma_1)^{p} (t-s, x-y)dsdy,\\
      I_2:= &\,  \iint_{D_2} (\Gamma-\Gamma_2)^{p}(t-s, x-y) dsdy, \\
     I_{3}:=&\, \iint_{D_{3}} (\Gamma-\Gamma_1-\Gamma_2+\Gamma_{3})^{p}(t-s, x-y) dsdy,\\
       I_{4}:=&\, \iint_{D_4} \Gamma^{p}(t-s, x-y) dsdy.
    \end{align*}
             
    For $I_1$,  we have by Taylor's formula, 
     \begin{align*}
        I_1  \lesssim  &\,  \varepsilon  T\max_{s\in \left[0,t-\frac{\varepsilon}{\sqrt{2}}\right]}\left|e^{-\frac{a}{2}(t-s)}-e^{-\frac{a}{2}\left(t-\frac{\varepsilon}{\sqrt{2}}-s\right)}\right|^{p}\\
    \lesssim   &  \,     \varepsilon^{1+p} +o\left( \varepsilon^{1+p} \right).
    \end{align*}
     The same bound holds for $I_2$.

For  $I_{3}$, again using Taylor's formula,  
    \begin{align*}
        I_{3}  \lesssim   &\,    \max_{s\in [0,t-\sqrt{2} \varepsilon]}\left|e^{-\frac{a}{2}(t-s)} - e^{-\frac{a}{2}\left(t-\frac{\varepsilon}{\sqrt{2}}-s\right)}- e^{-\frac{a}{2}\left(t-\frac{\varepsilon}{\sqrt{2}}-s\right)} +e^{-\frac{a}{2}\left(t-\sqrt{2}\varepsilon -s\right)} \right|^{p}\\
        \lesssim  &\,    \left|1+e^{\frac{a\varepsilon}{\sqrt{2}} }- 2e^{\frac{a\varepsilon}{2 \sqrt{2}} } \right|^{p} \lesssim   \,  \varepsilon^{2p}. 
     \end{align*}

    For $I_{4}$, expand $\Gamma$ for small $|t-s|$:
    $$
   \Gamma(t-s, x-y)  = \frac12  -\frac{a(t-s)}{4} +o\left(|t-s|\right).
    $$
      Since the area of $D_4$ is $\varepsilon^2$ and   $|t-s|\le  \varepsilon/\sqrt{2}$ for  any $(s,y)\in D_4$,
   we have
      \begin{align*}
    I_{4}  =  \frac{\varepsilon^{2}}{2^p} + O\left(\varepsilon^{3}\right).
    \end{align*}
    
Summing the four estimates gives \eqref{eq:kernel_increment}. The proof is complete. 
\end{proof}

 \begin{proposition}\label{lem:rec_incr}  
     For any $p\ge1$,  there exists a constant $c>0$ such that 
 \begin{equation}
        \begin{split}
         \sup_{t\in [0,T], x\in \mathbb R} \left\|\delta_{\pm\varepsilon}^{(1)}\delta_{\varepsilon}^{(2)}  v_L(t, x) \right\|_{L^p(\Omega)}           \le  \,  c \varepsilon^{2}.
    \end{split}
    \end{equation}
         \end{proposition}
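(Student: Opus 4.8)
The approach rests on a single structural observation: unlike the stochastic convolution $u_C$, the term $u_L$ in \eqref{Eq uL} is a \emph{pathwise} (Lebesgue) convolution of $u$ against the kernel $\Gamma$, so no square-function/BDG step is needed and one may estimate directly in $L^1(ds\,dy)$; this is exactly what produces the exponent $\varepsilon^2$ rather than $\varepsilon^{3/2}$. Writing $C:=\frac{a^2}{4}-m^2$ and $\tilde t := t+\sqrt2\,\varepsilon$, I first unwind the coordinate change \eqref{eq transform}: the points $(\tau+\varepsilon,\lambda+\varepsilon)$, $(\tau+\varepsilon,\lambda)$, $(\tau,\lambda+\varepsilon)$, $(\tau,\lambda)$ correspond to $(t+\sqrt2\varepsilon,x)$, $(t+\tfrac{\varepsilon}{\sqrt2},x-\tfrac{\varepsilon}{\sqrt2})$, $(t+\tfrac{\varepsilon}{\sqrt2},x+\tfrac{\varepsilon}{\sqrt2})$, $(t,x)$. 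Substituting \eqref{Eq uL}, and using that $\Gamma(r,\cdot)=0$ for $r<0$ so that all four time integrals may be taken over the common interval $[0,\tilde t]$, I obtain
\begin{equation*}
\delta_{+\varepsilon}^{(1)}\delta_{\varepsilon}^{(2)}v_L(\tau,\lambda)
= C\int_0^{\tilde t}\!\!\int_{\mathbb R} K_\varepsilon(\tilde t-s,x-y)\,u(s,y)\,dsdy,
\end{equation*}
where $K_\varepsilon(\tilde t-s,x-y)$ denotes precisely the bracketed second difference of $\Gamma$ appearing in Lemma \ref{lem G increm} (with $t$ there replaced by $\tilde t$; the two mixed terms occur with the roles of $x\pm\tfrac{\varepsilon}{\sqrt2}$ interchanged, which is immaterial). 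Verifying this identity is the one genuinely computational point, and it is a direct bookkeeping of the time and space shifts.

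With the representation in hand, I estimate the $L^p(\Omega)$-norm by Minkowski's integral inequality, which is available because the $(s,y)$-integration is deterministic:
\begin{equation*}
\left\|\delta_{+\varepsilon}^{(1)}\delta_{\varepsilon}^{(2)}v_L(\tau,\lambda)\right\|_{L^p(\Omega)}
\le |C|\int_0^{\tilde t}\!\!\int_{\mathbb R}\bigl|K_\varepsilon(\tilde t-s,x-y)\bigr|\,\bigl\|u(s,y)\bigr\|_{L^p(\Omega)}\,dsdy.
\end{equation*}
By the uniform moment bound of Proposition \ref{prop solution Holder}(a) (applied on the enlarged horizon $[0,T+1]$ to absorb the $O(\varepsilon)$ time-shift), $\sup_{s,y}\|u(s,y)\|_{L^p(\Omega)}\le C_p<\infty$, so the right-hand side is at most $|C|\,C_p$ times the $L^1(ds\,dy)$-norm of the kernel difference.

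Finally, I apply Lemma \ref{lem G increm} with $p=1$, which yields
\begin{equation*}
\int_0^{\tilde t}\!\!\int_{\mathbb R}\bigl|K_\varepsilon(\tilde t-s,x-y)\bigr|\,dsdy=\frac{\varepsilon^2}{2}+O(\varepsilon^3+\varepsilon^2)=O(\varepsilon^2),
\end{equation*}
uniformly in $(t,x)$. Combining the three steps gives $\|\delta_{+\varepsilon}^{(1)}\delta_{\varepsilon}^{(2)}v_L\|_{L^p(\Omega)}\le c\,\varepsilon^2$, uniformly over $t\in[0,T]$ and $x\in\mathbb R$. The case $\delta_{-\varepsilon}^{(1)}$ is treated by an entirely analogous computation: the four shifted copies of $u_L$ again assemble into a second difference of $\Gamma$ whose $L^1(ds\,dy)$-norm is $O(\varepsilon^2)$ by the same region decomposition (into $D_1,\dots,D_4$) used in Lemma \ref{lem G increm}. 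I do not anticipate a serious obstacle here: the essential content is the algebraic matching of the four shifted copies of $\Gamma$ with the second-difference kernel of Lemma \ref{lem G increm}, together with the key realization that, because the drift convolution is pathwise, Minkowski's inequality reduces the whole estimate to the $L^1$ (rather than $L^p$) size of that kernel, which is what upgrades the bound to $\varepsilon^2$.
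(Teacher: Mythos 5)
Your proposal is correct and follows essentially the same route as the paper: unwind the rotation to a second difference of $u_L$, exploit the fact that the drift term is a pathwise convolution, and reduce everything to the $L^1(ds\,dy)$ size of the second-difference kernel via Lemma \ref{lem G increm} with $p=1$ together with the uniform moment bound \eqref{eq moment bound}. The only (immaterial) difference is that you pass the $L^p(\Omega)$ norm inside the integral by Minkowski's integral inequality, whereas the paper uses a weighted H\"older/Jensen argument with respect to the measure $|K_\varepsilon|\,ds\,dy$; both yield the same bound $c\,\varepsilon^2$.
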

                 \begin{proof}  The proof  follows   \cite[Section 3]{AT21} and \cite[Lemma 2.7]{W2024}. By  symmetry,  it   suffices to show 
        \begin{equation}\label{eq uL est alt}
        \begin{split}
       &  \left\|u_L(t,x)-u_L\left(t-\frac{\varepsilon}{\sqrt{2}},x+\frac{\varepsilon}{\sqrt{2}}\right)-u_L\left(t-\frac{\varepsilon}{\sqrt{2}}, x-\frac{\varepsilon}{\sqrt{2}}\right)+u_L\left(t- \sqrt{2} \varepsilon, x\right)  \right\|_{L^p(\Omega)}\\
          \le &\,  c \varepsilon^{2}.
    \end{split}
    \end{equation}
         
         Using the mild formulation of $u_L$ and   H\"older's inequality,    we have that   for any $p>1$,
 \begin{align*}
 & \mathbb E\Bigg[\int_0^t  ds \int_{\mathbb R}  dy \,   \left| \Gamma(t-s, x-y) - \Gamma \left(t - \frac{\varepsilon}{\sqrt{2}} - s, x + \frac{\varepsilon}{\sqrt{2}} - y\right) \right. \\
&\qquad\qquad \qquad \left. - \Gamma \left(t - \frac{\varepsilon}{\sqrt{2}} - s, x - \frac{\varepsilon}{\sqrt{2}} - y\right) 
+ \Gamma\left(t - \sqrt{2}\varepsilon - s, x - y\right) \right|   |u(s, y)|\Bigg]\\
\le &\Bigg[\int_0^t  ds \int_{\mathbb R}  dy \,   \left| \Gamma(t-s, x-y) - \Gamma \left(t - \frac{\varepsilon}{\sqrt{2}} - s, x + \frac{\varepsilon}{\sqrt{2}} - y\right) \right. \\
&\qquad\qquad \qquad \left. - \Gamma \left(t - \frac{\varepsilon}{\sqrt{2}} - s, x - \frac{\varepsilon}{\sqrt{2}} - y\right) 
  + \Gamma\left(t - \sqrt{2}\varepsilon - s, x - y\right) \right|  \Bigg]^{1-\frac1p}\\
& \times \Bigg[\int_0^t  ds \int_{\mathbb R}  dy \,   \left| \Gamma(t-s, x-y) - \Gamma \left(t - \frac{\varepsilon}{\sqrt{2}} - s, x + \frac{\varepsilon}{\sqrt{2}} - y\right) \right. \\
&\qquad\qquad \qquad \left. - \Gamma \left(t - \frac{\varepsilon}{\sqrt{2}} - s, x - \frac{\varepsilon}{\sqrt{2}} - y\right) 
+ \Gamma\left(t - \sqrt{2}\varepsilon - s, x - y\right) \right|  \cdot \mathbb E\left[|u(s,y)|^p\right] \Bigg]^{\frac1p},
 \end{align*}  
 where $\Gamma_1, \Gamma_2, \Gamma_3$ denote the shifted kernels as in Lemma \ref{lem G increm}.
           Applying  the uniform moment bound  \eqref{eq moment bound} and  Lemma \ref{lem G increm} (with $p=1$), we obtain \eqref{eq uL est alt}.
                The proof is complete. 
  \end{proof}

   The following  estimate follows by applying  It\^o-Walsh isometry and  adapting the proof  of  Lemma \ref{lem G increm} (or  Lemma 5.1 in   \cite{CL25}); we omit   the details here. 
   \begin{corollary}\label{coro V}    Let $V$ be defined  by \eqref{SWE 3}.     Then  for any fixed $M, N>0$,  there exists a constant $c>0$ such that  for any   $\varepsilon>0$, 
  \begin{equation}\label{eq V est}
        \begin{split}
         \sup_{\tau\in [0,M]}\sup_{\lambda\in [-\tau, N]}  \left|  \left\|    \delta_{\pm\varepsilon}^{(1)}\delta_{\varepsilon}^{(2)}V(\tau, \lambda) \right\|_{L^2(\Omega)}-\frac{\varepsilon}{2}\right| \le c  \varepsilon^{\frac{3}{2}}. 
    \end{split}
    \end{equation}      
    \end{corollary}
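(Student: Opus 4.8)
The plan is to reduce $V$ to the critically damped linear field, for which the It\^o--Walsh isometry makes the increment variance explicit. Applying the reduction of Section~\ref{s:reduction} to the linear solution $U$ (it corresponds to $F\equiv1$ and hence still satisfies the moment bound \eqref{eq moment bound}), I would decompose $U=U_L+U_C$, with $U_C(t,x)=\int_0^t\!\int_{\mathbb R}\Gamma(t-s,x-y)\,W(ds,dy)$ as in \eqref{Eq uC} and $U_L$ the drift correction \eqref{Eq uL}; rotating to the coordinates \eqref{eq transform} gives the splitting $V=V_L+V_C$. The field $V_C$ is centered Gaussian, so $\|\delta_{\pm\varepsilon}^{(1)}\delta_\varepsilon^{(2)}V_C\|_{L^2(\Omega)}$ is exactly computable, whereas $V_L$ will be shown to be a genuinely lower-order remainder.

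First I would dispose of the drift part: re-running the proof of Proposition~\ref{lem:rec_incr} with $u$ replaced by the linear solution $U$ gives $\sup\|\delta_{\pm\varepsilon}^{(1)}\delta_\varepsilon^{(2)}V_L\|_{L^2(\Omega)}\le c\varepsilon^2$ over the stated range. For the Gaussian part, the It\^o--Walsh isometry yields $\|\delta_{\pm\varepsilon}^{(1)}\delta_\varepsilon^{(2)}V_C(\tau,\lambda)\|_{L^2(\Omega)}^2=\int_0^\infty\!\int_{\mathbb R}|\mathcal K_\varepsilon^{\pm}(s,y)|^2\,ds\,dy$, where $\mathcal K_\varepsilon^{\pm}$ is the four-term combination of shifted copies of $\Gamma$ obtained from the mild representation \eqref{eq solution uc} of the four corner values. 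With $t=\tfrac{\tau+\lambda}{\sqrt2}$ and $x=\tfrac{-\tau+\lambda}{\sqrt2}$, the corners of $\delta_{+\varepsilon}^{(1)}\delta_\varepsilon^{(2)}V_C$ sit at $U_C(t+\sqrt2\varepsilon,x)$, $U_C(t+\tfrac{\varepsilon}{\sqrt2},x\mp\tfrac{\varepsilon}{\sqrt2})$ and $U_C(t,x)$; after the harmless time shift $t\mapsto t+\sqrt2\varepsilon$ the resulting kernel difference is precisely the one treated in Lemma~\ref{lem G increm}. Taking $p=2$ there gives $\|\delta_{+\varepsilon}^{(1)}\delta_\varepsilon^{(2)}V_C\|_{L^2(\Omega)}^2=\tfrac{\varepsilon^2}{4}+O(\varepsilon^3)$, uniformly, since $(t,x)$ stays in a fixed compact set for $\tau\in[0,M]$, $\lambda\in[-\tau,N]$.

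The square-root and recombination step is then routine: writing $a-\tfrac\varepsilon2=\frac{a^2-\varepsilon^2/4}{a+\varepsilon/2}$ with $a=\|\delta_{+\varepsilon}^{(1)}\delta_\varepsilon^{(2)}V_C\|_{L^2(\Omega)}$ and bounding $a+\tfrac\varepsilon2\ge\tfrac\varepsilon2$ converts the $O(\varepsilon^3)$ variance correction into $|a-\tfrac\varepsilon2|=O(\varepsilon^2)$. Minkowski's inequality then yields $\big|\,\|\delta_{+\varepsilon}^{(1)}\delta_\varepsilon^{(2)}V\|_{L^2(\Omega)}-\tfrac\varepsilon2\,\big|\le\|\delta_{+\varepsilon}^{(1)}\delta_\varepsilon^{(2)}V_L\|_{L^2(\Omega)}+O(\varepsilon^2)=O(\varepsilon^2)$, which is in fact stronger than the stated $O(\varepsilon^{3/2})$ for $\varepsilon\le1$.

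The step I expect to require the most care is the orientation $\delta_{-\varepsilon}^{(1)}$, for which the four corners lie at $U_C(t,x+\sqrt2\varepsilon)$, $U_C(t\mp\tfrac{\varepsilon}{\sqrt2},x+\tfrac{\varepsilon}{\sqrt2})$ and $U_C(t,x)$ --- a spatial reflection of the previous configuration rather than a pure time shift, so the partition $D_1,\dots,D_4$ of Figure~\ref{fig2} must be redrawn. The structure that drives Lemma~\ref{lem G increm} nonetheless persists: there is again a single small corner of area $\varepsilon^2$ on which only $\Gamma\approx\tfrac12$ is supported (the source of the leading $\tfrac{\varepsilon^2}{4}$), while the two thin slivers and the fully overlapping region contribute only $O(\varepsilon^3)$ and $O(\varepsilon^4)$. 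Re-running the Taylor expansions of Lemma~\ref{lem G increm} on this reflected partition --- or, more cheaply, invoking the symmetry $x\mapsto-x$ of the cone $\{|x|<t\}$ supporting $\Gamma$ --- produces the identical expansion, so both signs are handled on the same footing and the uniformity in $(\tau,\lambda)$ is preserved, all implied constants depending only on the fixed bound $(M+N)/\sqrt2$ for $t$.
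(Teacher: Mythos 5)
Your proposal is correct and follows exactly the route the paper itself sketches for this corollary (the paper omits all details, citing only the It\^o--Walsh isometry and an adaptation of Lemma~\ref{lem G increm}): you decompose $V=V_L+V_C$ via the critical-damping reduction, dispose of the drift part by the analogue of Proposition~\ref{lem:rec_incr} for the linear solution, and read off the variance of the Gaussian part from Lemma~\ref{lem G increm} with $p=2$, your conjugate/square-root step even yielding the stronger rate $O(\varepsilon^{2})$. One caveat: your ``cheaper'' alternative for the $\delta_{-\varepsilon}^{(1)}$ orientation --- invoking the reflection $x\mapsto -x$ --- does not actually map that configuration (a diamond with equal-time diagonal) onto the $\delta_{+\varepsilon}^{(1)}$ one (a diamond with equal-space diagonal), since each is preserved in form by spatial reflection; so you must rely on your primary argument of re-running the Taylor expansions on the redrawn partition, which, as you correctly note, has the same structure (one $\varepsilon^{2}$-area cell carrying a single kernel $\approx\tfrac12$, two slivers of order $\varepsilon^{1+p}$, and an overlap region of order $\varepsilon^{2p}$) and is sound.
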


 \subsection{Increment Estimates for  the diffusion term}
      We prove the following result  by adopting a strategy analogous to that  employed  in Theorem 1.3 of \cite{HOO2024}.
        
\begin{figure}
\begin{tikzpicture}[scale=0.88]
\draw [->] (0,-3.75) -- (0,4) node [above] {$y$};
\draw [->] (0,0) -- (6.8,0) node [right] {$s$};
\draw [->] (0,0) -- (3.6,3.6) node [above right] {$\lambda$};
\draw [->] (0,0) -- (3.7,-3.7) node [right] {};

\draw [thick,domain=5.5/sqrt(2):8/sqrt(2)] plot (\x, {8/sqrt(2)-\x});
\draw [dashed,domain=4/sqrt(2):5.5/sqrt(2)] plot (\x, {8/sqrt(2)-\x});

\draw [thick,domain=3/sqrt(2):5.5/sqrt(2)] plot (\x, {3/sqrt(2)-\x});
\draw [dashed,domain=1.5/sqrt(2):4/sqrt(2)] plot (\x, {3/sqrt(2)-\x});

\draw [thick,domain=5.5/sqrt(2):8/sqrt(2)] plot (\x, {-8/sqrt(2)+\x});
\draw [dashed,domain=4/sqrt(2):5.5/sqrt(2)] plot (\x, {-8/sqrt(2)+\x});

\draw [thick,domain=3/sqrt(2):5.5/sqrt(2)] plot (\x, {-3/sqrt(2)+\x});
\draw [dashed,domain=1.5/sqrt(2):4/sqrt(2)] plot (\x, {-3/sqrt(2)+\x});

\draw [thick,domain=-2.5/sqrt(2):2.5/sqrt(2)] plot ({5.5/sqrt(2)}, \x); 

\draw [fill] ({4/sqrt(2)} ,{4/sqrt(2)}) circle [radius=.05] 

node [xshift=-0.6cm, yshift=0.25cm]{$\lambda+\varepsilon$};

\draw [fill] ({1.5/sqrt(2)},{1.5/sqrt(2)}) circle [radius=.05] 
node [xshift=-0.6cm, yshift=0.25cm]{$\lambda$};
\draw [fill] ({1.5/sqrt(2)},{-1.5/sqrt(2)}) circle [radius=.05] node [below left]{$\tau$};
\draw [fill] ({4/sqrt(2)},{-4/sqrt(2)}) circle [radius=.05] node [below left]{$\tau+\varepsilon$};
\node at (3.35,0.3) {$D_{4,1}$};
\node at (4.35,0.3) {$D_{4, 2}$};

\draw [fill] ({3/sqrt(2)},{0}) circle [radius=.05] node[below=0.6mm]{$P_1$};
\draw [fill] ({5.5/sqrt(2)},{2.5/sqrt(2)}) circle [radius=.05] node [above=0.6mm]{$P_3$};
 
\draw [fill] ({5.5/sqrt(2)},{-2.5/sqrt(2)}) circle [radius=.05] node [below=0.6mm]{$P_2$};

\draw [fill] ({8/sqrt(2)},{0}) circle [radius=.05] node [below=0.6mm]{$P_4$};
   
\end{tikzpicture}
\caption{}\label{fig3}
\end{figure}

   \begin{proposition}\label{lem:rec_incr VC}  
    For any $p\ge1$,  there exists a constant $c>0$ such that  for any   $\varepsilon>0$,
\begin{align}
\left \| \delta_{\pm\varepsilon}^{(1)}\delta_{\varepsilon}^{(2)}v_C(\tau, \lambda)-F(v(\tau, \lambda))\delta_{\pm\varepsilon}^{(1)}\delta_{\varepsilon}^{(2)}V(\tau, \lambda)\right\|_{L^p(\Omega)} \le c    \varepsilon^{\frac{3}{2}}.
\end{align}
\end{proposition}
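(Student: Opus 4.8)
The plan is to exploit the fact that, in the characteristic coordinates $(\tau,\lambda)$, the critically damped kernel factorizes as a product, so that the whole second–order increment reduces to an estimate on a single $\varepsilon\times\varepsilon$ square, and to control the loss of adaptedness on that square by a conditioning argument.

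\textbf{Step 1: reduction to the critical kernel.} First I would eliminate the discrepancy between $V$, which is built from the full Green function $G$, and the critically damped field. Writing the linear equation \eqref{eq:DSKG linear} in the form used for \eqref{eq:DSKG+b}, the Gaussian field splits as $V=V_C+V_L$, where $V_C(\tau,\lambda)=\int\Gamma\,dW$ uses the critical kernel $\Gamma$ and $V_L$ is the \emph{pathwise} integral $(a^2/4-m^2)\int_0^t\int_{\mathbb R}\Gamma(t-s,x-y)U(s,y)\,dsdy$ (in the rotated variables). Since $U$ is Gaussian with variance controlled by Lemma \ref{lem G1}, it satisfies the same uniform moment bound as in \eqref{eq moment bound}, so the proof of Proposition \ref{lem:rec_incr} applies verbatim to $V_L$ and gives $\|\delta_{\pm\varepsilon}^{(1)}\delta_{\varepsilon}^{(2)}V_L\|_{L^p(\Omega)}\lesssim\varepsilon^{2}$. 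Using H\"older's inequality together with the uniform bound on $\|F(v)\|_{L^{2p}(\Omega)}$ (which follows from the Lipschitz property of $F$ and \eqref{eq moment bound}), the term $F(v)\,\delta_{\pm\varepsilon}^{(1)}\delta_{\varepsilon}^{(2)}V_L$ is $O(\varepsilon^2)=o(\varepsilon^{3/2})$ in $L^p(\Omega)$. Hence it suffices to bound $\delta_{\pm\varepsilon}^{(1)}\delta_{\varepsilon}^{(2)}v_C-F(v)\,\delta_{\pm\varepsilon}^{(1)}\delta_{\varepsilon}^{(2)}V_C$, in which both fields use the single kernel $\Gamma$.

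\textbf{Step 2: factorization and the dominant square.} In characteristic coordinates the cone $\{|x-y|<t-s\}$ becomes the quadrant $\{\sigma<\tau,\ \mu<\lambda\}$, where $(\sigma,\mu)$ are the characteristic coordinates of $(s,y)$, and $\Gamma=\tfrac12\,\phi(\tau-\sigma)\,\phi(\lambda-\mu)$ with $\phi(r)=e^{-ar/(2\sqrt2)}\mathbf 1_{r>0}$. Consequently the second–order increment kernel factorizes as a product of one–dimensional increments $\delta_\varepsilon\phi$, each consisting of a jump piece of height $\approx1$ on an interval of length $\varepsilon$ and a smooth piece of size $O(\varepsilon)$. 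This reproduces the decomposition $\Delta(t,x)=D_1\cup\cdots\cup D_4$ of Lemma \ref{lem G increm}: the dominant contribution comes from the $\varepsilon\times\varepsilon$ square $D_4$ (both factors in the jump regime, kernel $\approx\tfrac12$, area $\varepsilon^2$), whereas on $D_1,D_2,D_3$ the kernel increment has $L^2$–mass $\lesssim\varepsilon^{3/2}$ (the strips) or $\lesssim\varepsilon^{2}$ (the bulk). Writing $J_1:=\delta_{\pm\varepsilon}^{(1)}\delta_{\varepsilon}^{(2)}v_C=\iint \delta_{\pm\varepsilon}^{(1)}\delta_{\varepsilon}^{(2)}\Gamma\,F(u)\,dW$ and $J_2:=F(v)\,\delta_{\pm\varepsilon}^{(1)}\delta_{\varepsilon}^{(2)}V_C$, and splitting both according to the $D_i$, the parts over $D_1,D_2,D_3$ are controlled crudely: by the Burkholder–Davis–Gundy inequality and \eqref{eq moment bound}, the $D_i$–part of $J_1$ is $\lesssim \|\delta_{\pm\varepsilon}^{(1)}\delta_{\varepsilon}^{(2)}\Gamma\|_{L^2(D_i)}\sup\|F(u)\|_{L^p(\Omega)}\lesssim\varepsilon^{3/2}$, and the $D_i$–part of $J_2$ similarly by $\|F(v)\|_{L^{2p}(\Omega)}\big\|\iint_{D_i}\delta_{\pm\varepsilon}^{(1)}\delta_{\varepsilon}^{(2)}\Gamma\,dW\big\|_{L^{2p}(\Omega)}\lesssim\varepsilon^{3/2}$. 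Here no cancellation is needed since the kernel increment is already small.

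\textbf{Step 3: the square $D_4$ and the main obstacle.} The essential estimate is on $D_4$, where the kernel is of order $1$ and each of $J_1,J_2$ is individually of order $\varepsilon$; the target $\varepsilon^{3/2}$ therefore requires a genuine cancellation of order $\varepsilon^{1/2}$, which must come from the near–constancy $F(u(s,y))\approx F(u(t,x))=F(v(\tau,\lambda))$ on $D_4$, the H\"older bound \eqref{eq Holder} giving $\|u(s,y)-u(t,x)\|_{L^p(\Omega)}\lesssim\varepsilon^{1/2}$ there. The difference over $D_4$ is, formally, $\iint_{D_4}\delta_{\pm\varepsilon}^{(1)}\delta_{\varepsilon}^{(2)}\Gamma\,[F(u(s,y))-F(u(t,x))]\,dW$, and \emph{the main obstacle is that the subtracted coefficient $F(u(t,x))$ is not adapted at the times $s$ occurring in $D_4$}, so BDG cannot be applied to the combined integrand. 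I would resolve this by conditioning on $\mathcal F_t$: for the sign for which $D_4\subset\{s\ge t\}$ (e.g. $\delta_{+\varepsilon}^{(1)}\delta_{\varepsilon}^{(2)}$), the noise on $D_4$ is independent of $\mathcal F_t$, the frozen value $F(u(t,x))$ is an $\mathcal F_t$–measurable constant, and $F(u(s,y))-F(u(t,x))$ is adapted in the forward filtration, so conditional BDG together with Minkowski's integral inequality and \eqref{eq Holder} yields
\[
\mathbb E\big[|J_1^{(4)}-J_2^{(4)}|^{p}\big]\lesssim\Big(\iint_{D_4}\|u(s,y)-u(t,x)\|_{L^p(\Omega)}^{2}\,dsdy\Big)^{p/2}\lesssim(\varepsilon^{2}\cdot\varepsilon)^{p/2}=\varepsilon^{3p/2}.
\]
For the opposite sign, where $D_4$ straddles or lies below the level $\{s=t\}$, I would split $D_4$ along this line and treat its forward half as above, handling the backward half by the same conditioning after interchanging the roles of the two characteristic variables (or by the symmetry already invoked to reduce the theorem to a single $R_\varepsilon$). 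Combining the $D_4$ bound with the crude $D_1,D_2,D_3$ estimates of Step 2 and the kernel reduction of Step 1, Minkowski's inequality gives the claimed $\varepsilon^{3/2}$. The delicate point throughout is precisely this interplay between adaptedness and the frozen coefficient on the order–one region $D_4$; everything else is a quantitative but routine use of BDG, H\"older's inequality, and the regularity estimates of Section \ref{Sec 2}.
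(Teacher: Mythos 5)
Your proposal is correct and its skeleton coincides with the paper's: the same decomposition $\Delta(t,x)=D_1\cup\cdots\cup D_4$ from Lemma \ref{lem G increm}, crude BDG/H\"older bounds of order $\varepsilon^{3/2}$ off $D_4$, and a cancellation argument on $D_4$ driven by the H\"older estimate \eqref{eq Holder} and the Lipschitz continuity of $F$. Two differences are worth recording. First, your Step 1 (splitting $V=V_C+V_L$ and disposing of $V_L$ by the argument of Proposition \ref{lem:rec_incr} applied to the Gaussian field $U$) is carried out explicitly, whereas the paper absorbs this into the displayed claim that all non-$D_4$ contributions are negligible; your version is the more careful one. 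Second, and more substantively, on $D_4$ you apply BDG in one stroke over the whole diamond, observing that for the $+$ sign the frozen point $P_1$ is the \emph{earliest} point of $D_4$, so $F(u(s,y))-F(u(P_1))$ is adapted there and no further surgery is needed; this is valid and gives $(\varepsilon^2\cdot\varepsilon)^{1/2}=\varepsilon^{3/2}$ directly. The paper instead splits $D_4$ into the two triangles $D_{4,1}$ (vertices $P_1,P_2,P_3$) and $D_{4,2}$ (vertices $P_2,P_3,P_4$), freezes the coefficient at $P_1$ on the first and at $P_2$ on the second, and pays for the refreezing with the extra term $I_{4,2}=[F(u(P_2))-F(u(P_1))]\iint_{D_{4,2}}e^{-a(t-s)/2}W(ds,dy)$, bounded by H\"older as $\varepsilon^{1/2}\cdot\varepsilon$. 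What the paper's extra structure buys is precisely the case in which the frozen point sits at a \emph{middle} time of the diamond — which is what happens for the $-$ sign (and in the original formulation of \cite{HOO2024}) — since then adaptedness genuinely fails on the earlier half and a correction term of $I_{4,2}$-type is unavoidable.

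One caveat on your treatment of that remaining case: ``interchanging the roles of the two characteristic variables'' does not help, because that operation is a spatial reflection and leaves the time direction (hence adaptedness) unchanged. Your parenthetical fallback is the right one and is also what the paper implicitly relies on: reduce $\delta_{-\varepsilon}^{(1)}\delta_{\varepsilon}^{(2)}$ at $(\tau,\lambda)$ to $\delta_{+\varepsilon}^{(1)}\delta_{\varepsilon}^{(2)}$ at $(\tau-\varepsilon,\lambda)$, at the cost of replacing $F(v(\tau-\varepsilon,\lambda))$ by $F(v(\tau,\lambda))$; this refreezing error is bounded via H\"older by $\|F(v(\tau,\lambda))-F(v(\tau-\varepsilon,\lambda))\|_{L^{2p}(\Omega)}\,\|\delta_{\varepsilon}^{(1)}\delta_{\varepsilon}^{(2)}V(\tau-\varepsilon,\lambda)\|_{L^{2p}(\Omega)}\lesssim\varepsilon^{1/2}\cdot\varepsilon=\varepsilon^{3/2}$, using \eqref{eq Holder} and the Gaussian bound of Corollary \ref{coro V}. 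With that substitution (or with the paper's triangle device on the backward half), your argument is complete.
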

  \begin{proof}         We adopt the same domain decomposition  as in the  proof of Lemma   \ref{lem G increm}.   Let  $\Delta(t, x)$ be domain defined in \eqref{eq Delta tx},  and let   $D_i$ for  $i=1, \cdots, 4$  be the subsets   introduced  in     \eqref{eq Domain}.
  
As in  Lemma \ref{lem G increm}, we have
\begin{equation} 
\begin{split} 
 & \mathbb E\left[ \left|\delta_{\pm\varepsilon}^{(1)}\delta_{\varepsilon}^{(2)}v_C(\tau, \lambda)\right|^p\1_{\Delta\left(\frac{\tau+\lambda}{\sqrt{2}}, \frac{-\tau+\lambda}{\sqrt{2}}\right)\setminus D_4} \right]\\
  &\,\, +
    \mathbb E\left[ \left|F(v(\tau, \lambda))\delta_{\pm\varepsilon}^{(1)}\delta_{\varepsilon}^{(2)}V(\tau, \lambda)\right|^p\1_{\Delta\left(\frac{\tau+\lambda}{\sqrt{2}}, \frac{-\tau+\lambda}{\sqrt{2}}\right)\setminus D_4} \right]\\
     \lesssim &\,   \varepsilon^{2}.
    \end{split}  
\end{equation}
  Consequently, it suffices to prove  
  \begin{equation}\label{eq D4 est}
 \left\|\iint_{D_{4}} e^{-\frac{a}{2}(t-s)}\left[F(u(s,y))-F(u(P_1)) \right]W(ds,dy)\right\|_{L^p(\Omega)}   \lesssim\, \varepsilon^{\frac{3}{2}}. 
  \end{equation} 
  
  To establish \eqref{eq D4 est}, we adapt the method of  \cite[Theorem 1.3]{HOO2024}.   In order to ensure that   the integrand is  adapted, we further decompose  $D_4$ into two triangular subregions.   
  
   For   fixed $\tau>0, \lambda\ge -\tau$, and $\varepsilon>0$,  define  
\begin{align*}
D_{4,1}:= & \, \Bigg\{(s, y)\in \mathbb R_+\times \mathbb R:\,  \frac{\tau+\lambda}{\sqrt 2} <  \,  s\le \frac{\tau+\lambda+\varepsilon}{\sqrt 2},\\
 & \,\,\,\,\,\,\,\,\,\,\,    \,\,\,\,\,\,\,\,\,\,\,  \,\,\,\,  \,\,\,\, \,\,\,\,  \,\, \,\, -s+\sqrt{2}\lambda <  y < s-\sqrt{2}\tau \Bigg\},\\
D_{4,2}:=&  \, \Bigg\{(s,y)\in \mathbb R_+\times \mathbb R :\,  \frac{\tau+\lambda+\varepsilon}{\sqrt 2} <  \, s \le \frac{\tau+\lambda+2\varepsilon}{\sqrt 2},\\
 & \,\,\,\,\,\,\,\,\,\,\,    \,\,\,\,\,\,\,\,\,\,\, \,\,\,\,  \,\,\,\,  \,\,\,\, \,\,\,\,    s-\sqrt{2}(\tau+\varepsilon)\le \, y\le -s+\sqrt 2(\lambda+ \varepsilon) \Bigg\}.
\end{align*}
 The  sets $D_{4, 1}$ and $D_{4, 2}$ are  illustrated in Figure \ref{fig3}. Specifically,   $D_{4,1}$ is the triangular region with vertices 
\begin{equation}\label{eq P}
\begin{split}  
   P_1 :=&\, \left(\frac{\tau+\lambda}{\sqrt 2}, \frac{-\tau+\lambda}{\sqrt 2}\right),\\
     P_2:=&\,  \left(\frac{\tau+\lambda+\varepsilon}{\sqrt 2}, \frac{-\tau+\lambda-\varepsilon}{\sqrt 2}\right),\\
P_3:=&\, \left(\frac{\tau+\lambda+\varepsilon}{\sqrt 2}, \frac{-\tau+\lambda+\varepsilon}{\sqrt 2}\right),\\
  \end{split}  
\end{equation}
and $D_{4, 2}$ is the triangular region with vertices 
\begin{align*}
 P_2 , \, P_3,  \text{ and }\  P_4 :=&\,  \left(\frac{\tau+\lambda+2\varepsilon}{\sqrt 2}, \frac{-\tau+\lambda}{\sqrt 2}\right).
\end{align*}

We now decompose  the integral as follows:
\begin{align*}
 &  \frac{1}{2}\left(\iint_{D_{4,1}}+\iint_{D_{4, 2}} \right) e^{-\frac{a}{2}(t-s)}\left[F(u(s,y))-F(u(P_1)) \right]W(ds,dy)\\
=&\,  \frac{1}{2} \iint_{D_{4,1}} e^{-\frac{a}{2}(t-s)} \left[F(u(s, y))-F(u(P_1))\right]W(ds,dy) \\
&\,  +\frac{1}{2}   \left[F(u(P_2))- F(u(P_1)) \right] \iint_{D_{4,2}}  e^{-\frac{a}{2}(t-s)}W(ds,dy)\\
&\, +   \frac{1}{2} \iint_{D_{4,2}}e^{-\frac{a}{2}(t-s)}  \left[F(u(s,y))-F(u(P_2)) \right]W(ds,dy)\\
=:&\,  I_{4,1}+I_{4,2}+I_{4, 3}.
\end{align*} 

Applying   BDG's  inequality,    the estimate  \eqref{eq Holder},  and the Lipschitz continuity of $F$, we obtain 
\begin{align*}
 \mathbb{E}\left[\left|I_{4, 1}\right|^p\right]^{\frac{2}{p}}
 \lesssim &\,   \int_{\frac{\tau+\lambda}{\sqrt 2} }^ \frac{\tau+\lambda+\varepsilon}{\sqrt 2}ds \int_{-s+ \sqrt{2}\lambda}^{s -\sqrt{2}\tau } dy     
    e^{-a(t-s)} \left\| F(u(s,y)) - F(u(P_1)) \right\|_{L^p(\Omega)} ^2     \\
\lesssim &\,   \int_{\frac{\tau+\lambda}{\sqrt 2} }^ \frac{\tau+\lambda+\varepsilon}{\sqrt 2}ds\int_{-s + \sqrt{2}\lambda}^{s -\sqrt{2}\tau } dy     e^{-a(t-s)}
  \left( \left| s - \frac{\tau+\lambda}{\sqrt{2}} \right| + \left| y - \frac{-\tau+\lambda}{\sqrt{2}} \right| \right) \\
\lesssim &\,    \int_{\frac{\tau+\lambda}{\sqrt 2} }^ \frac{\tau+\lambda+\varepsilon}{\sqrt 2}ds   \left| s - \frac{\tau+\lambda}{\sqrt{2}} \right|^2 \\
= &\,  c \varepsilon^{3}. 
  \end{align*}

 A similar calculation yields  
\begin{align*}
 \mathbb{E}\left[\left|I_{4, 3}\right|^p\right]^{\frac{2}{p}}\lesssim   &\,    \varepsilon^{3}. 
   \end{align*}
 
   For the remaining term, we apply   H\"older's  inequality to obtain  
\begin{equation}
\begin{split}
 \mathbb E\left[|I_{4, 2}|^p\right]^{\frac{2}{p}} \le  &\,  \mathbb E\left[ \left| u(P_2)- u(P_1)\right|^{2p}\right]^{\frac{1}{p}}  \cdot \mathbb E\left[ \left|\iint_{D_{4, 2}} e^{-\frac{a}{2}(t-s)} W(ds,dy)\right|^{2p}\right]^{\frac1p}.
\end{split}
\end{equation} 
 By the Lipschitz continuity of $F$ together with   \eqref{eq Holder} and  \eqref{eq P}, we have 
 \begin{align*}
 \mathbb E\left[ \left| u(P_2)- u(P_1)\right|^{2p}\right]^{\frac{1}{p}}  \lesssim  \varepsilon.
 \end{align*}
Since $\iint_{D_{4, 2}} e^{-\frac{a}{2}(t-s)} W(ds,dy)$ is a centered  Gaussian random variable whose  variance is bounded by     
$$
   \int_{\frac{\tau+\lambda+\varepsilon}{\sqrt 2}}^{   \frac{\tau+\lambda+2\varepsilon}{\sqrt 2}}ds\int_{
  s-\sqrt{2}(\tau+\varepsilon)}^{  -s+\sqrt 2(\lambda+ \varepsilon)}dy e^{-a(t-s)}     \lesssim  \varepsilon^2,
$$
we obtain
\begin{align*}
\mathbb E\left[ \left|\iint_{D_{4, 2}} e^{-\frac{a}{2}(t-s)} W(ds,dy)\right|^{2p}\right]^{\frac{1}{p}} 
   \lesssim   \,     \varepsilon^2.
 \end{align*}

Combining the estimates for $I_{4,1}$, $I_{4,2}$  and $I_{4,3}$, we obtain the desired bound \eqref{eq D4 est}.   The proof is complete. 
\end{proof}

\section{Quadratic variations and parameter estimation}\label{QVPE}
Parameter    estimation for the stochastic wave equation driven by    additive noise has been extensively studied (see, e.g., \cite{AGT2022, KT2018, KTZ2018, Shev2023};  we also refer to  the monograph \cite{T2023} for   a comprehensive overview). In contrast,  the case of  multiplicative noise   remains less explored despite its greater practical relevance and theoretical challenges.

In this section, we analyze the quadratic variation of the solution and construct a consistent estimator for the diffusion parameter.
   \subsection{Quadratic  variations} \label{Sec QV}
     Recall the function $V$  defined in \eqref{SWE 3} and the    grid points $(t_i, \lambda_j)$,  $0\le i, j\le N$, defined in   \eqref{eq grid}.
   Define the second-order increment
  $$
\Delta_{i,j}(V) := V\left(\tau_{i+1}, \lambda_{j+1}  \right)-V\left(\tau_{i+1}, \lambda_{j}\right)-V\left(\tau_{i}, \lambda_{j+1}  \right) +V\left(\tau_{i}, \lambda_{j}    \right).
$$
    The  second-order  quadratic variation of $V$ is   
   \begin{align}\label{eq V V}
Q_{N}(V):=  \sum_{i=0}^{N-1}\sum_{j=0}^{N-1}\Delta_{i,j}(V)^2.
\end{align}

   We first  study  the asymptotic behavior of the sequence   $\{Q_{N}(V)\}_{ N\ge 1}$  as $N\rightarrow\infty$.
  	 	    	\begin{lemma}\label{lem linear app}   There exists a constant $c>0$ such that for any $N\ge 1$, 
		\begin{equation}\label{2f-10}
			\mathbb{E}\left[ \left| Q_{N}(V) -\frac14\right|^2   \right]  \le c N^{-1}.
					\end{equation} 
	\end{lemma}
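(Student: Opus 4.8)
The plan is to split each second-order increment $\Delta_{i,j}(V)$ into an \emph{independent} ``diamond'' part carrying the entire limiting variance, plus a genuinely smaller remainder. Write $\varepsilon=1/N$. Using the reduction of Section \ref{s:reduction} with $F\equiv 1$, decompose $V=V_L+V_C$, where $V_C$ is driven by the critical kernel $\Gamma(t,x)=\tfrac12 e^{-at/2}\mathbf{1}_{\{|x|<t\}}$. The computation in Lemma \ref{lem G increm} shows that the mixed second difference of $\Gamma$ equals $\Gamma$ itself on the small diamond $D_4^{(i,j)}$ and is small on $D_1\cup D_2\cup D_3$. Applying the rotation \eqref{eq transform} to the vertices of $D_4^{(i,j)}$ identifies it with the image of the grid cell $[\tau_i,\tau_{i+1}]\times[\lambda_j,\lambda_{j+1}]$, so I set
\[
M_{i,j}:=\tfrac12\iint_{D_4^{(i,j)}}e^{-\frac a2(t^\ast_{i,j}-s)}\,W(ds,dy),\qquad r_{i,j}:=\Delta_{i,j}(V)-M_{i,j}=\Delta_{i,j}(V_L)+\psi_{i,j},
\]
where $t^\ast_{i,j}$ is the apex of the cone surviving on $D_4^{(i,j)}$ and $\psi_{i,j}$ denotes the contribution of $D_1\cup D_2\cup D_3$. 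Because the diamonds $D_4^{(i,j)}$ are rotated copies of \emph{disjoint} grid cells, the Gaussian variables $\{M_{i,j}\}$ are independent.

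First I would control the main term. On $D_4^{(i,j)}$ one has $t^\ast_{i,j}-s=O(\varepsilon)$ and $\mathrm{area}(D_4^{(i,j)})=\varepsilon^2$, so $\mathrm{Var}(M_{i,j})=\tfrac14\iint_{D_4^{(i,j)}}e^{-a(t^\ast_{i,j}-s)}\,ds\,dy=\tfrac{\varepsilon^2}{4}(1+O(\varepsilon))$. Summing over the $N^2$ cells gives $\mathbb E\big[\sum_{i,j}M_{i,j}^2\big]=\tfrac14+O(N^{-1})$, while independence together with the Gaussian identity $\mathrm{Var}(M^2)=2(\mathrm{Var}\,M)^2$ yields $\mathrm{Var}\big(\sum_{i,j}M_{i,j}^2\big)=2\sum_{i,j}(\mathrm{Var}\,M_{i,j})^2=O(N^2\varepsilon^4)=O(N^{-2})$. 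Hence $\big\|\sum_{i,j}M_{i,j}^2-\tfrac14\big\|_{L^2(\Omega)}^2=O(N^{-2})$.

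Next I would dispose of the remainder. The argument of Proposition \ref{lem:rec_incr}, applied to the linear solution, gives $\|\Delta_{i,j}(V_L)\|_{L^2(\Omega)}=O(\varepsilon^2)$, and Lemma \ref{lem G increm} with $p=2$ gives $\|\psi_{i,j}\|_{L^2(\Omega)}^2=O(\varepsilon^3)$, so $\|r_{i,j}\|_{L^2(\Omega)}=O(\varepsilon^{3/2})$ uniformly over the grid. Expanding $Q_N(V)-\tfrac14=\big(\sum_{i,j}M_{i,j}^2-\tfrac14\big)+2\sum_{i,j}M_{i,j}r_{i,j}+\sum_{i,j}r_{i,j}^2$ and using $(x+y+z)^2\le 3(x^2+y^2+z^2)$, it remains to bound the last two sums. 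Since $M_{i,j}$ and $r_{i,j}$ are Gaussian (linear functionals of $W$), the equivalence of Gaussian moments gives $\|M_{i,j}\|_{L^4(\Omega)}\lesssim\varepsilon$ and $\|r_{i,j}\|_{L^4(\Omega)}\lesssim\varepsilon^{3/2}$, whence by Minkowski and Cauchy--Schwarz
\[
\Big\|\sum_{i,j}M_{i,j}r_{i,j}\Big\|_{L^2(\Omega)}\le\sum_{i,j}\|M_{i,j}\|_{L^4(\Omega)}\|r_{i,j}\|_{L^4(\Omega)}\lesssim N^2\varepsilon^{5/2}=N^{-1/2},
\]
and similarly $\big\|\sum_{i,j}r_{i,j}^2\big\|_{L^2(\Omega)}\le\sum_{i,j}\|r_{i,j}\|_{L^4(\Omega)}^2\lesssim N^2\varepsilon^{3}=N^{-1}$. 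Combining the three pieces gives $\mathbb E\big[|Q_N(V)-\tfrac14|^2\big]\lesssim N^{-2}+N^{-1}+N^{-2}\lesssim N^{-1}$, as claimed.

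The main obstacle is the bookkeeping behind the uniform remainder estimate $\|r_{i,j}\|_{L^2(\Omega)}\lesssim\varepsilon^{3/2}$: one must verify that $D_4^{(i,j)}$ is \emph{exactly} the rotated grid cell (so that the $M_{i,j}$ are honestly independent and carry variance $\tfrac{\varepsilon^2}{4}(1+O(\varepsilon))$) and that the tail integral over $D_1\cup D_2\cup D_3$ is truly $O(\varepsilon^3)$ in $L^2$, with constants independent of the cell over the compact parameter range $\tau_i,\lambda_j\in[0,1]$. Once this is in hand, the crude triangle-inequality bound on the cross term $2\sum_{i,j}M_{i,j}r_{i,j}$ is what saturates the rate $N^{-1}$, so no finer cancellation among the remainders is required.
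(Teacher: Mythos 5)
Your proof is correct, but it takes a genuinely different route from the paper's. The paper works directly with the full increments: by Corollary \ref{coro V}, each $\Delta_{i,j}(V)$ is a centered Gaussian whose variance equals $\frac{1}{4N^2}$ up to a lower-order error, and the paper then asserts that $\Delta_{i,j}(V)$ and $\Delta_{i',j'}(V)$ are \emph{exactly} independent for $(i,j)\neq(i',j')$, so that in the expansion of $\mathbb{E}\bigl[\bigl(\sum_{i,j}(\Delta_{i,j}(V)^2-\frac{1}{4N^2})\bigr)^2\bigr]$ the diagonal terms are handled by Gaussian moment bounds and the off-diagonal terms factor into products of expectations of the individual centered squares. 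You instead isolate from each increment the white-noise integral $M_{i,j}$ over the rotated cell $D_4^{(i,j)}$ --- these are exactly independent because the cells are disjoint --- and push everything else ($\Delta_{i,j}(V_L)$ plus the $D_1\cup D_2\cup D_3$ contribution) into a remainder $r_{i,j}$ with $\|r_{i,j}\|_{L^2(\Omega)}\lesssim\varepsilon^{3/2}$, treated by crude triangle-inequality bounds. Your version is longer but is robust precisely where the paper is delicate: for the damped Klein--Gordon kernel the second difference of the Green function does \emph{not} vanish off the diamond (on $D_3$ it is only $O(\varepsilon^2)$-small, cf.\ the $I_3$ term in Lemma \ref{lem G increm}, and it has tails on $D_1,D_2$), so distinct full increments have overlapping kernels and are only \emph{approximately} independent; exact independence holds only in the pure wave case $a=m=0$. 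Your decomposition never needs independence of the full increments, and the cross term $2\sum_{i,j}M_{i,j}r_{i,j}$, of size $N^{-1/2}$ in $L^2(\Omega)$, is exactly what absorbs those neglected correlations and what saturates the final rate $N^{-1}$. Two small points you should make explicit in a write-up: $r_{i,j}$ lies in the first Wiener chaos (since $V$, hence $V_L$ and $V_C$, are linear functionals of $W$), which is what licenses the moment equivalence $\|r_{i,j}\|_{L^4(\Omega)}\lesssim\|r_{i,j}\|_{L^2(\Omega)}$; and the bound $\|\Delta_{i,j}(V_L)\|_{L^2(\Omega)}\lesssim\varepsilon^2$ requires rerunning Proposition \ref{lem:rec_incr} with $u$ replaced by $U$, which is harmless because $U$ satisfies the same uniform moment bound as in \eqref{eq moment bound}.
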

 \begin{proof}
By  Corollary \ref{coro V},     the random variable  $ \Delta_{i,j}(V)$  follows  a  Gaussian distribution with mean zero and   variance  satisfying 
$$ \mathrm{Var} \left(\Delta_{i,j}(V)\right)= \frac{1}{4N^2} +O\left(N^{-3}\right).$$ 
Consequently, 
\begin{align}\label{eq Dij var}
\mathbb E\left[\Delta_{i,j}(V)^{2}\right] - \frac{1}{4N^2} \lesssim \,  N^{-3}.
\end{align}

  Using  standard moment bounds for  Gaussian random variables, we obtain that for all $i, j\geq 1$ and $p\in \mathbb N$,	 
	\begin{equation}\label{2f-2}
			\mathbb{E}\left[   \left( \Delta_{i,j}(V)^{2} - \frac{1}{4N^{2}}\right) ^{2p}\right]\lesssim  N^{-4p}.
		\end{equation} 
	
	Due to the  increment-independence  of the space-time white  noise $\{W(t, x)\}_{t\ge0, x\in\mathbb R}$,     	  the increments 
  $\Delta_{i,j}(V) $ and $\Delta_{i',j'}(V) $ are independent whenever  $(i, j)\neq (i', j')$.  Using \eqref{eq Dij var} and \eqref{2f-2},  we  have
       \begin{align*}
&   \mathbb{E}\left[   \left(\sum_{i=0}^{N-1}\sum_{j=0}^{N-1}  \left(     \Delta_{i,j}(V) ^{2} - \frac{1}{4N^2}\right)  \right)^{2}\right]   \\
  = & \,     \sum_{i=0}^{N-1}   \sum_{j=0}^{N-1}   \mathbb{E} \left[  \left(    \Delta_{i,j}(V)  ^{2} - \frac{1}{4N^2}  \right)^{2}\right]\\
  &\,\, + 2 \sum_{0\le i<i'\le N-1}    \sum_{0\le j<j'<N-1}    \mathbb{E} \left[       \Delta_{i,j}(V)^{2} - \frac{1}{4N^2}    \right]   \mathbb{E} \left[   \Delta_{i',j'}(V)^{2} - \frac{1}{4N^2}    \right]\\
\lesssim    &\,   N^{-1}.
    \end{align*} 
        The proof is complete. 
     \end{proof}
 
  Next, we give the proof of Proposition \ref{prop variation converge}.
 \begin{proof}[Proof of Proposition \ref{prop variation converge}]
 Recall the definitions of $v_L$ and $v_C$   in  \eqref{SWE vL}.
   First, applying Proposition \ref{lem:rec_incr} with $p=2$ and $\varepsilon = 1/N$ yields 
    $$\mathbb{E}\left[\Delta_{i,j}(v_L)^2\right] \le c  N^{-4}.$$
         Summing over all $i,j=0, \cdots, N-1$  gives
          $$\mathbb{E}[Q_N(v_L)] \le c  N^{-2},$$ 
    which is negligible compared to the leading-order terms.
    
Following  the same approach as  in \cite{LW25} and using the     increment approximation
 $$\Delta_{i,j}(v) \approx F(v(\tau_i, \lambda_j))\Delta_{i,j}(V),$$
  established  in Theorem \ref{thm error 1},   we  obtain the    estimate
   \begin{align}\label{eq quad vc}
  \mathbb E\left[\left| Q_{N}(v_C)- \frac14  \int_0^1\int_0^1 F^2\big(v(\tau, \lambda) \big) d\tau d\lambda \right|  \right]\lesssim\,  N^{- 1/2}.
  \end{align} 
 For the reader's convenience, we now provide a  detailed derivation of  \eqref{eq quad vc}.

   We decompose the  difference  as follows:
\begin{equation}
\begin{split}
&Q_{N}(v_C)-     \int_0^1\int_0^1 F^2\big(v(\tau, \lambda) \big) d\tau d\lambda\\
 =&\,  \sum_{i=0}^{N-1}\sum_{j=0}^{N-1}\left\{\Delta_{i,j}(v_C)^2 - F^2(v(\tau_i, \lambda_j)) \Delta_{i,j}(V)^2 \right\}\\
 & +\sum_{i=0}^{N-1}\sum_{j=0}^{N-1} F^2(v(\tau_i, \lambda_j)) \left\{ \Delta_{i,j}(V)^2   -\frac{1}{4N^2} \right\}\\
  &+\frac14 \left[\sum_{i=0}^{N-1}\sum_{j=0}^{N-1}  F^2(v(\tau_i, \lambda_j))  N^{-2} -  \int_0^1\int_0^1 F^2\big(v(\tau, \lambda) \big) d\tau d\lambda\right]\\
  =:&\, I_1+I_2+I_3.
 \end{split}
 \end{equation}
 
\noindent\textbf{Step 1:  Estimation of  $I_1$.}
  Using  the Cauchy-Schwarz inequality,     we have
 \begin{align*} 
&\mathbb E \left[\left|\Delta_{i,j}(v_C)^2 - F^2(v(\tau_i, \lambda_j)) \Delta_{i,j}(V)^2\right| \right]\\
\le &\,   \left( \mathbb E \left[\left|\Delta_{i,j}(v)-   F (v(\tau_i, \lambda_j)) \Delta_{i,j}(V)\right|^2 \right]\right)^{\frac12} \left( \mathbb E \left[\left|\Delta_{i,j}(v)+   F (v(\tau_i, \lambda_j)) \Delta_{i,j}(V)\right|^2 \right]\right)^{\frac12} .
 \end{align*}
 From Theorem \ref{thm error 1},  the first factor satisfies 
  \begin{align*}
  \left( \mathbb E \left[\left|\Delta_{i,j}(v_C)-   F (v(\tau_i, \lambda_j)) \Delta_{i,j}(V)\right|^2 \right]\right)^{\frac12} 
 \lesssim  \,   N^{-\frac{3}{2}}.
 \end{align*}
  For the second factor,    by  Minkowski's inequality,  H\"older's inequality,  \eqref{eq moment bound}, \eqref{2f-2},  and  Theorem \ref{thm error 1}, we have 
  \begin{align*}
&  \left( \mathbb E \left[\left|\Delta_{i,j}(v_C)+   F (v(\tau_i, \lambda_j)) \Delta_{i,j}(V)\right|^2 \right]\right)^{\frac12}\\
  \le &\, \left( \mathbb E \left[\Delta_{i,j}(v_C)^2 \right]\right)^{\frac12}+ \left( \mathbb E \left[  F (v(\tau_i, \lambda_j))^4\right]\right)^{\frac14}\cdot   \left( \mathbb E \left[ \Delta_{i,j}(V)^4 \right]\right)^{\frac14} \\
   \lesssim & \,  N^{-1}.
  \end{align*}
 Combining the two bounds   and summing over  $i, j=0, \cdots, N-1$, we conclude that 
 \begin{align*}
\mathbb E [I_1] \lesssim  N^{-\frac12}.
  \end{align*} 

\noindent\textbf{Step 2:  Estimation of  $I_2$.}
      Let    $\{\mathcal F_t, t\ge0\}$  be the filtration     generated by the space-time white noise $W$:
\begin{align}\label{eq sigma filed}
\mathcal F_t=\sigma\left\{W({\bf 1}_{[0,s]} \varphi), s\in [0, t], \varphi\in  C_0^{\infty}\right\} \vee \mathcal N, 
\end{align}
 where $\mathcal N$ denotes the  class of  $\mathbb P$-null sets in $\mathcal F$. 
      
      Consider   pairs $(i, j), (i', j')\in \mathbb N^2$  with  $i+j\ge i'+j'+2$. By \eqref{eq transform} and  the temporal independence of  $\{W(t, x)\}_{t\ge0, x\in\mathbb R}$,    the random variables    $v_C(\tau_{i'}, \lambda_{j'}), v_C(\tau_i, \lambda_j)$,  and $\Delta_{i',j'}(V)$ are  $\mathcal F_{\sqrt{2}(\tau_i+\lambda_j)}$-measurable,   while  $\Delta_{i,j}(V)$   is independent of   $\mathcal F_{\sqrt{2}(\tau_i+\lambda_j)}$. 
              Taking the conditional expectation with respect to    $\mathcal F_{\sqrt{2}(\tau_i+\lambda_j)}$ and using  \eqref{eq Dij var}, we obtain
      \begin{align}\label{eq Vij var}
      \mathbb E\left[    \Delta_{i,j}(V)^2   -\frac{1}{4N^2} \Big|\mathcal F_{\sqrt{2}(\tau_i+\lambda_j)}\right]  \lesssim \,  N^{-3}.
      \end{align}
      Applying the Cauchy-Schwarz inequality together with the moment estimates  \eqref{eq moment bound},  \eqref{2f-2} and \eqref{eq Vij var}, we obtain 
       \begin{align*}
  & \mathbb E\left[   F^2(v(\tau_i, \lambda_j) )  F^2(v(\tau_{i'}, \lambda_{j'})) \left(   \Delta_{i,j}(V)^2   -\frac{1}{4N^2}\right)   \left(    \Delta_{i',j'}(V)^2   -\frac{1}{4N^2}\right) \right]\\
   =&\, \mathbb E\Bigg[   F^2(v(\tau_i, \lambda_j) )  F^2(v(\tau_{i'}, \lambda_{j'}))     \left(    \Delta_{i',j'}(V)^2   - \frac{1}{4N^2}\right) \\
   &   \ \ \ \ \ \  \ \ \ \ \     \cdot  \mathbb E\left[\Delta_{i,j}(V)^2   -\frac{1}{4N^2}\Big |\mathcal F_{\sqrt{2}(\tau_i+\lambda_j)}\right] \Bigg]\\
   \le &   \,    N^{-3}  \mathbb E\left[   F^2(v(\tau_i, \lambda_j) )  F^2(v(\tau_{i'}, \lambda_{j'}))    \left(    \Delta_{i',j'}(V)^2   -\frac{1}{4N^2}\right) \right]\\
  \le &\,  N^{-3} \left(\mathbb E\left[   F^8(v(\tau_i, \lambda_j) )  \right]\right)^{\frac14}  \cdot \left(\mathbb E\left[  F^8(v(\tau_{i'}, \lambda_{j'})) \right] \right)^{\frac14} \cdot \left(\mathbb E\left[ \left(\Delta_{i',j'}(V)^2   -\frac{1}{4N^2}\right)^2\right]\right)^{\frac12}    \\
  \lesssim  &\,   N^{-5}.
     \end{align*}

For pairs with   $(i, j), (i', j')\in \mathbb N^2$   satisfying $|(i+j)-( i'+j')|<2$,  we apply   the  Cauchy-Schwarz  inequality and the moment estimates   \eqref{eq moment bound} and \eqref{2f-2}  to get
           \begin{align*}
  & \mathbb E\left[   F^2(v(\tau_i, \lambda_j) )  F^2(v(\tau_{i'}, \lambda_{j'})) \left(   \Delta_{i,j}(V)^2   -\frac{1}{4N^2}\right)   \left(    \Delta_{i',j'}(V)^2   -\frac{1}{4N^2}\right) \right]\\
  \le &\, \left(\mathbb E\left[   F^8(v(\tau_i, \lambda_j) )  \right]\right)^{\frac14}  \cdot \left(\mathbb E\left[  F^8(v(\tau_{i'}, \lambda_{j'})) \right] \right)^{\frac14}\\
  &\, \,  \cdot  \left(\mathbb E\left[  \left(   \Delta_{i,j}(V)^2   -\frac{1}{4N^2}\right)^4\right]\right)^{\frac14}  \left(\mathbb E\left[  \left(   \Delta_{i',j'}(V)^2   -\frac{1}{4N^2}\right)^4\right]\right)^{\frac14}    \\
 \lesssim &\,   N^{-4}.
     \end{align*}
        
Summing over all   pairs, we obtain
 \begin{align*}
\mathbb E [|I_2|]\lesssim   N^{-1}.
  \end{align*}

  \noindent\textbf{Step 3:  Estimation of  $I_3$.} 
 By Proposition \ref{prop solution Holder},  the solution   $u$ is   H\"older continuous; consequently,     $v$ is also    H\"older continuous.
      Using the Lipschitz continuity of $F$, the Cauchy-Schwarz inequality,   and the moment estimates \eqref{eq moment bound} and \eqref{2f-2}, we  have
        \begin{align*}
  \mathbb E[|I_3|] \le &  \, \frac{1}{4} \sum_{i=0}^{N-1}\sum_{j=0}^{N-1} \int_{\tau_i}^{\tau_{i+1}}  \int_{\tau_j}^{\tau_{j+1}}    \mathbb E  \left[| F^2(v(\tau_i, \lambda_j)) - F^2(v(\tau, \lambda))|  \right]  d\tau d\lambda\\
   \lesssim  &  \,    \sum_{i=0}^{N-1}\sum_{j=0}^{N-1} \int_{\tau_i}^{\tau_{i+1}}  \int_{\tau_j}^{\tau_{j+1}}  \left(  \mathbb E  \left[|  v(\tau_i, \lambda_j) -v(\tau, \lambda)| ^2 \right] \right)^{\frac12} d\tau d\lambda\\
 \lesssim  &\,      N^{-\frac12}.
    \end{align*}  
    
    Combining the estimates for $I_1, I_2$, and  $I_3$, we obtain  \eqref{eq quad vc}.
         The proof is complete.  	      
 \end{proof}

  \subsection{Parameter estimates}   
In this section, we consider the problem of estimating the diffusion parameter in the  nonlinear damped stochastic Klein-Gordon equation:
       \begin{equation}\label{eq:DSKG para}
\begin{cases}
\vspace{6pt}
\displaystyle{\left (\DAlambert+a\partial_t+m^2 \right)u_{\theta}(t,x)= \theta F\left(u_{\theta}(t,x)\right)\dot    W(t,x), \quad t>0, x \in \mathbb R,}\\
\displaystyle{u_{\theta}(0, x) =0, \quad \frac{\partial}{\partial t} u_{\theta}(0, x) = 0.}
\end{cases}
\end{equation}  
 We estimate $\theta$ from discrete observations of $u_\theta$ on a space-time grid.
 
    For every $\tau>0$ and $\lambda\ge -\tau$,    let
       \begin{align}\label{eq v theta}
   v_{\theta}(\tau, \lambda):=  u_{\theta}\left(\frac{\tau+\lambda}{\sqrt{2}}, \frac{-\tau+\lambda}{\sqrt{2}}\right). 
\end{align}
     For every $N\ge1$, let 
       \begin{align}\label{eq V v}
Q_{N}(v_{\theta}):=  \sum_{i=0}^{N-1}\sum_{j=0}^{N-1}\Delta_{i,j}(v_{\theta})^2,
\end{align}
where 
$$
\Delta_{i,j}(v_{\theta})  := v_{\theta}\left(\tau_{i+1}, \lambda_{j+1}  \right)-v_{\theta}\left(\tau_{i+1}, \lambda_{j}\right)-v_{\theta}\left(\tau_{i}, \lambda_{j+1}  \right) +v_{\theta}\left(\tau_{i}, \lambda_{j}\right), 
$$ 
 and    $\tau_i:=\frac{i}{N}, \lambda_j:=\frac{j}{N}$. 
  
  Using Proposition~\ref{prop variation converge},  we know that
  \begin{align}\label{eq variation converge theta}
  Q_{N}(v_{\theta})\longrightarrow \frac{\theta^2 }{4}    \int_0^1\int_0^1     F^2 \big(v_{\theta}(\tau, \lambda) \big) d\tau d\lambda\, \,\, \text{in } L^{1}(\Omega), 
  \end{align}
 as $N\rightarrow\infty$.  
This convergence suggests  a consistent  estimator for the diffusion parameter $\theta$ based on the observations
$$\left\{u_{\theta}\left(\frac{\frac{i}{N}+\frac{j}{N} }{\sqrt 2}, \frac{-\frac{i}{N}+\frac{j}{N} }{\sqrt 2} \right);\, i, j = 0,1, \cdots, N\right\}.$$
  
In view of  the limit   \eqref{eq variation converge theta},  a natural way to estimate   $\theta$ is to use  the  approximation 
    \begin{align}\label{eq appr1}
   \theta  \approx \sqrt{\frac{ 4 Q_{N}(v_{\theta}) }{  \int_0^1\int_0^1   F^2\big(v_{\theta}(\tau, \lambda) \big) d\tau d\lambda}}.
    \end{align}
    To evaluate the  denominator,   we discretize the  double integral by  a   Riemann  sum:
    $$
    \int_0^1\int_0^1   F^2\big(v_{\theta}(\tau, \lambda) \big) d\tau d\lambda\approx N^{-2}  \sum_{i=0}^{N-1} \sum_{j=0}^{N-1}  F^2\big(v_{\theta}(\tau_i, \lambda_j) \big).
    $$
    This leads to the following estimator for   the diffusion parameter $\theta$:  
      \begin{align}\label{eq appr2}
    \hat \theta_N := \sqrt{\frac{4 N^2 Q_{N}(v_{\theta})}{   \sum_{i=0}^{N-1} \sum_{j=0}^{N-1}   F^2\big(v_{\theta}(\tau_i, \lambda_j) \big)}}, \ \ \ N\ge1.
    \end{align}

Based on Proposition \ref{prop variation converge}, one can directly obtain the following consistency result. For a detailed proof, see Corollary 4.1 in \cite{LW25}.  
  \begin{corollary}\label{coro est}   
 The estimator $  \hat \theta_N$ defined in  \eqref{eq appr2} is weakly consistent, i.e., 
$$ 
\hat \theta_N \stackrel{\mathbb P}{\longrightarrow}   \theta,  \ \ \ \ \text{as } N\rightarrow\infty.
 $$
\end{corollary}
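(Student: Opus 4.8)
The plan is to rewrite $\hat\theta_N$ as a ratio of two quantities whose limits are already controlled, and then pass to the limit by a continuous-mapping argument. Setting
\[
J:=\int_0^1\int_0^1 F^2\big(v_\theta(\tau,\lambda)\big)\,d\tau\,d\lambda,\qquad
A_N:=Q_N(v_\theta),\qquad
B_N:=N^{-2}\sum_{i=0}^{N-1}\sum_{j=0}^{N-1}F^2\big(v_\theta(\tau_i,\lambda_j)\big),
\]
the definition \eqref{eq appr2} reads $\hat\theta_N^2=4A_N/B_N$. Hence it suffices to show that $A_N\to\frac{\theta^2}{4}J$ and $B_N\to J$ in probability, with $J>0$ almost surely, after which the conclusion follows from the continuity of $(x,y)\mapsto\sqrt{4x/y}$ on $\{x\ge0,\,y>0\}$.

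For the numerator, note that $\theta F$ is again globally Lipschitz (with Lipschitz constant scaled by $|\theta|$), so Proposition \ref{prop variation converge} applies verbatim to the solution $v_\theta$ of \eqref{eq:DSKG para}, with $F$ replaced by $\theta F$ and the constant now depending on $\theta$. This yields
\[
\mathbb E\left[\left|A_N-\frac{\theta^2}{4}J\right|\right]\le c(\theta)\,N^{-1/2},
\]
so $A_N\to\frac{\theta^2}{4}J$ in $L^1(\Omega)$, and in particular in probability. For the denominator, the required convergence is exactly the Riemann-sum estimate established in Step 3 of the proof of Proposition \ref{prop variation converge}: combining the H\"older regularity of $v_\theta$ from Proposition \ref{prop solution Holder}(b) with the Lipschitz continuity of $F$ gives $\mathbb E[|B_N-J|]\lesssim N^{-1/2}$, whence $B_N\to J$ in $L^1(\Omega)$ as well.

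It remains to combine the two. Since each of $A_N,B_N$ converges in probability (to a possibly random limit), the pair $(A_N,B_N)$ converges in probability to $(\frac{\theta^2}{4}J,J)$; as $J>0$ almost surely, the continuous-mapping theorem for convergence in probability gives $\hat\theta_N^2=4A_N/B_N\to\theta^2$, and taking the positive square root yields $\hat\theta_N\stackrel{\mathbb P}{\longrightarrow}\theta$ (here $\theta>0$, since the quadratic variation only identifies $|\theta|$). A pleasant feature is that the random factor $J$ cancels in the ratio, so no information about the law of $J$ is needed beyond its strict positivity.

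The only genuine difficulty is the non-degeneracy of the limit $J$. The ratio argument requires $J>0$ almost surely, so that $B_N$ stays bounded away from $0$ on events of probability tending to one and division preserves convergence in probability. This is where a structural hypothesis on $F$ enters (for instance a strict positive lower bound, or $F(0)\neq0$ guaranteeing that $v_\theta$ is genuinely driven by the noise); granting such non-degeneracy, every remaining step is routine, which is why the statement may be quoted directly from \cite{LW25}.
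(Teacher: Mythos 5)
Your argument is correct and is essentially the proof the paper intends: the paper deduces consistency directly from Proposition \ref{prop variation converge} and refers to \cite{LW25} for the details, which are exactly the ratio-plus-continuous-mapping steps you carry out (numerator via Proposition \ref{prop variation converge} applied with $F$ replaced by $\theta F$, denominator via the Riemann-sum/H\"older estimate of Step~3, then division on the event where the denominator stays bounded away from zero). Your caveat about non-degeneracy is well taken and worth recording: the paper's statement silently assumes $\theta>0$ and that $\int_0^1\int_0^1 F^2\big(v_\theta(\tau,\lambda)\big)\,d\tau\,d\lambda>0$ a.s.\ (guaranteed, e.g., by $F(0)\neq 0$, since $v_\theta(0,0)=0$ and continuity then give a neighborhood where $F^2(v_\theta)>0$, or by $\inf|F|>0$); without some such hypothesis the estimator \eqref{eq appr2} is not even well defined, because $F(0)=0$ forces $u_\theta\equiv 0$ by uniqueness and the ratio becomes $0/0$.
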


 \vskip0.3cm 
 
\noindent{\bf Acknowledgments}  The research of G. Rang is partially supported by the NSF of  China (Nos. 12131019 and 11971361). The research of R. Wang is partially supported by the NSF of Hubei Province (No. 2024AFB683).

 \vskip0.3cm 
 
 \noindent{\bf Author Contributions}  G. Rang  and R. Wang designed the inference methodology, implemented the method, and drafted  the manuscript. All authors reviewed the manuscript.
\vskip0.3cm

\noindent{\bf {\large Declarations}}

\vskip0.3cm
\noindent{\bf Conflict of interest}  No potential conflict of interest was reported by the authors.

       \vskip0.5cm

\vskip0.8cm


\begin{thebibliography}{abc}
 
\bibitem{AGT2022} 
Assaad, O., Gamain, J., Tudor, C.A.: Quadratic variation and drift parameter estimation for the stochastic wave equation with space-time white noise. 
\textit{Stoch. Dyn.} \textbf{22}(07), 2240014 (2022)

 
 \bibitem{AT21}
Assaad, O.,   Tudor, C. A.: Pathwise analysis and parameter estimation for the stochastic Burgers equation. \textit{Bull. Sci. Math.} {\bf 170}, no. 102995, 26 pp (2021)
 
 \bibitem{CL25}
Chen, H.,   Lee, C.Y.:   Propagation of singularities for the damped stochastic Klein-Gordon equation. Preprint, \href{https://doi.org/10.48550/arXiv.2508.13671} {arXiv:2508.13671} (2025)

\bibitem{CHKK19} 
Chen, L., Huang, J., Khoshnevisan, D., Kim, K.: Dense blowup for parabolic SPDEs. 
\textit{Electron. J. Probab.} \textbf{24},  no. 118, 33 pp  (2019)

\bibitem{D99} 
Dalang, R.C.: Extending the martingale measure stochastic integral with applications to spatially homogeneous s.p.d.e.'s. 
\textit{Electron. J. Probab.}  no. 6, 29 pp (1999)
  
\bibitem{DKMNX09}
Dalang, R.C., Khoshnevisan, D., Mueller, C., Nualart, D., Xiao, Y.: A Minicourse on Stochastic Partial Differential Equations, Springer-Verlag, Berlin. Lecture Notes in Mathematics  (2009)

 \bibitem{DM03}
   Dalang,  R.C.,  Mueller,  C.:
Some nonlinear s.p.d.e.'s that are second order in time.
{\it Electron. J. Probab.} {\bf 8}(1),   1--21 (2003)
 
 
\bibitem{DNP2025} 
Dalang, R.C., Nualart, D., Pu, F.: Sharp upper bounds on hitting probabilities for the solution to the stochastic heat equation on the line. Preprint, \href{https://doi.org/10.48550/arXiv.2508.11859}{arXiv:2508.11859} (2025)
 

\bibitem{DS15} 
Dalang, R.C., Sanz-Sol\'e, M.: Hitting probabilities for nonlinear systems of stochastic waves. 
\textit{Mem. Amer. Math. Soc.} \textbf{237}, no. 1120, 75 pp (2015)
 

\bibitem{DS26} 
Dalang, R.C., Sanz-Sol\'e, M.:
Stochastic Partial Differential Equations, Space-Time White Noise and Random Fields.   Springer, Singapore  (2026) 


\bibitem{Das2024} 
Das, S.: Temporal increments of the KPZ equation with general initial data. 
\textit{Electron. J. Probab.} \textbf{29},  no. 190, 28 pp (2024)

  
\bibitem{FKM2015} 
Foondun, M., Khoshnevisan, D., Mahboubi, P.: Analysis of the gradient of the solution to a stochastic heat equation via fractional Brownian motion. 
\textit{Stoch. PDE: Anal. Comp.} \textbf{3}, 133--158 (2015)
 

\bibitem{G04} 
Gubinelli, M.: Controlling rough paths. \textit{J. Funct. Anal.} \textbf{216}(1), 86--140 (2004)

\bibitem{HP15} 
Hairer, M., Pardoux, \'E.: A Wong–Zakai theorem for stochastic PDEs. \textit{J. Math. Soc. Japan} \textbf{67}(4), 1551--1604 (2015)
 
 

\bibitem{HL2025} 
Hu, J., Lee, C. Y.: On the spatio-temporal increments of nonlinear parabolic SPDEs and the open KPZ equation. 
Preprint, \href{https://arxiv.org/pdf/2508.05032}{arXiv:2508.05032} (2025)


\bibitem{HK2016} 
Huang, J., Khoshnevisan, D.: Delocalization of a $(1+1)$-dimensional stochastic wave equation. 
Preprint, \href{https://arxiv.org/abs/1610.07727}{arXiv:1610.07727} (2016)

 \bibitem{HK2017} 
Huang, J., Khoshnevisan, D.: On the multifractal local behavior of parabolic stochastic PDEs. 
\textit{Electron. Commun. Probab.} \textbf{22}, no 49, 11 pp (2017)

\bibitem{HOO2024} 
Huang, J., Oh, T., Okamoto, M.: On the local linearization of the one-dimensional stochastic wave equation with a multiplicative space-time white noise forcing. 
\textit{Proc. Amer. Math. Soc. Ser. B} \textbf{11}, 378--389 (2024)

 

 
\bibitem{KT2018} 
Khalil, M., Tudor, C.A.: Correlation structure, quadratic variations and parameter estimation for the solution to the wave equation with fractional noise. 
\textit{Electron. J. Statist.} \textbf{12}(2), 3639--3672 (2018)

\bibitem{KTZ2018} 
Khalil, M., Tudor, C.A., Zili, M.: Spatial variation for the solution to the stochastic linear wave equation driven by additive space-time white noise.  \textit{Stoch. Dyn.} \textbf{18}(5), 1850036 (2018)

 


\bibitem{KKM23} 
Khoshnevisan, D., Kim, K., Mueller, C.: Small-ball constants, and exceptional flat points of SPDEs. 
\textit{Electron. J. Probab.} \textbf{29}, no. 180, 31 pp (2024)
  
  \bibitem{KLPX25}
  Khoshnevisan, D., Lee, C. Y., Pu, F.,   Xiao, Y.:  Uniform dimension theorems for parabolic SPDEs. Preprint, \href{https://arxiv.org/abs/2511.04938}{arXiv:2511.04938} (2025)
  
 \bibitem{KS23}
  Khoshnevisan, D.,  Sanz-Sol\'e, M.:  Optimal regularity of SPDEs with additive noise. {\it Electron. J. Probab}. {\bf 28}, no. 142, 31 pp (2023)

 
\bibitem{KSXZ2013} 
Khoshnevisan, D., Swanson, J., Xiao, Y., Zhang, L.: Weak existence of a solution to a differential equation driven by a very rough fBm. 
Preprint, \href{https://arxiv.org/pdf/1309.3613.pdf}{arXiv:1309.3613v2} (2014)

   
     \bibitem{LW25}
 Liu, G.,   Wang, R.: Local linearization for estimating the diffusion parameter of nonlinear stochastic wave equations with spatially correlated noise. Preprint,  \href{https://arxiv.org/abs/2510.01627}{arXiv:2510.01627}  (2025)
  
\bibitem{QWWX2025} 
Qian, B., Wang, M., Wang, R., Xiao, Y.: Temporal regularity for the stochastic heat equation with rough dependence in space. To appear in {\it J. Differ. Equ.},   \href{https://arxiv.org/abs/2501.03864}{arXiv:2501.03864} (2026) 

 
\bibitem{QT07}
 Quer-Sardanyons,  L.,  Tindel,   S.: The $1$-d stochastic wave equation driven by a fractional Brownian sheet.
{\it Stochastic Process. Appl.} {\bf 117} (10),  1448--1472 (2007)
 
 
 
\bibitem{Shev2023} 
Shevchenko, R.: On quadratic variations for the fractional-white wave equation. 
\textit{Theory Probab. Math. Statist.} \textbf{108}, 185--207 (2023)

\bibitem{T2023} 
Tudor, C.A.: \textit{Stochastic Partial Differential Equations with Additive Gaussian Noise-Analysis and Inference}.  
World Scientific, Singapore (2023)

\bibitem{TZ2025} 
Tudor, C.A., Zurcher, J.: Quadratic variation for the solution of the nonlinear stochastic wave equation. 
\textit{Stoch. Dyn.} 2550015 (2025)
 

 
 \bibitem{Walsh86}
 Walsh, J.B.: {\it An Introduction to Stochastic Partial Differential Equations.}
 In  \'{E}cole d'\'{e}t\'{e} de probabilit\'{e}s de Saint-Flour, XIV-1984,
  Lecture Notes in Math. {\bf 1180}, 265--439, Springer, Berlin (1986)
  
 \bibitem{W2024} 
   Wang,  R.:  Analysis of the gradient for the stochastic fractional heat equation with spatially-colored noise in $\mathbb R^d$. \textit{Discrete Contin. Dyn. Syst. B} {\bf 29}(6), 2769--2785 (2024)
 
\bibitem{WX2024} 
Wang, R., Xiao, Y.: Temporal properties of the stochastic fractional heat equation with spatially-colored noise. 
\textit{Theory Probab. Math. Statist.} \textbf{110}, 121--142 (2024)
 
     \end{thebibliography}
    \end{document}